\newtheorem{theorem}{Theorem}[section]
\newtheorem{lemma}[theorem]{Lemma}
\newtheorem{corollary}[theorem]{Corollary}
\theoremstyle{definition}
\newtheorem{definition}[theorem]{Definition}
\begin{document}

\title{Counting edge-Kempe-equivalence classes for 3-edge-colored cubic graphs}

\author{sarah-marie belcastro and Ruth Haas \\Department of Mathematics and Statistics\\
Smith College,
Northampton, MA 01063 USA\\
{\tt  smbelcas@toroidalsnark.net,    rhaas@smith.edu} \\ \\
}

\maketitle

\begin{abstract}
Two edge colorings of a graph are  {\em edge-Kempe equivalent} if one can be obtained from the other by a series of edge-Kempe switches. This work gives some results for the number of edge-Kempe equivalence classes for cubic graphs. In particular we show every 2-connected planar bipartite cubic  graph has exactly one edge-Kempe equivalence class.   Additionally, we exhibit infinite families of nonplanar bipartite cubic graphs with a range of  numbers of edge-Kempe equivalence classes.  
Techniques are developed that will be useful for analyzing other classes of graphs as well.

\end{abstract}

\section{Introduction and Summary}

Back in the frosts of time, Alfred Bray Kempe introduced the notion of changing colorings by switching maximal two-color chains of vertices (for vertex colorings) \cite{kempe} or edges (for edge colorings).  The maximal two-color chains are now called \emph{Kempe chains} and \emph{edge-Kempe chains} respectively; switching the colors along such a chain is called a \emph{Kempe switch} or \emph{edge-Kempe switch} as appropriate. This process is of interest across the study of colorings.  It is also of interest in statistical mechanics, where certain dynamics in the antiferromagnetic $q$-state Potts model correspond to Kempe switches on vertex colorings \cite{phys1}, \cite{phys2}.  
In some cases, these dynamics also correspond to edge-Kempe switches \cite{ms}.

In the present work we are concerned with understanding when two edge-colorings are equivalent under a sequence of edge-Kempe switches and when not.  We allow multiple edges on our (labeled) graphs; loops are prohibited (and will mostly  be excluded by other  constraints such as 3-edge colorability).

 A single edge-Kempe switch is denoted by $-$.  That is,  if coloring $c_i$ becomes coloring $c_j$ after a single edge-Kempe switch, then $c_i - c_j$. If  coloring $c_j$  can be converted to coloring $c_k$ by a sequence of edge-Kempe switches, then $c_j$ and $c_k$ are equivalent; we denote this by $c_j\sim c_k$.  Because $\sim$ is an equivalence relation, we may consider the equivalence classes on the set of colorings of a graph $G$ edge-colored with $n$ colors. In this paper we focus on the \emph{number} of edge-Kempe equivalence classes and denote this quantity by $K'(G,n)$. (In other work this has been denoted Ke$(L(G),n)$ \cite{mohar} and $\kappa_E(G,n)$ \cite{mms}.) 
 
 Note that any global permutation of colors can be achieved by edge-Kempe switches because  the symmetric group $S_n$ is generated by transpositions.  Thus two colorings that differ only by a permutation of colors are  edge-Kempe equivalent.

Recall that $\Delta(G)$ is the largest vertex degree in $G$ and that $\chi'(G)$ is the smallest number of colors needed to properly edge-color $G$. When more colors are used than possibly needed to edge-color the graph, then there is but a single edge-Kempe equivalence class, i.e., when $n> \chi'(G)+1$ then $K'(G,n)=1$ \cite[Thm. 3.1]{mohar}.  More is known if $\Delta(G)$ is restricted; when $\Delta(G)\leq 4, K'(G,\Delta(G)+2)=1$ \cite[Thm. 2]{mms} and when  $\Delta(G)\leq 3, K'(G,\Delta(G)+1)=1$ \cite[Thm. 3]{mms}.  For bipartite graphs there is a stronger result: when $n>\Delta(G)$, $K'(G,n)=1$ \cite[Thm. 3.3]{mohar}.   Little is known about $K'(G, \Delta(G))$.

This paper focuses on  cubic graphs, particularly those that are 3-edge colorable. Mohar suggested  classifying cubic bipartite graphs with $K'(G,3)=1$ \cite{mohar}; we provide a partial answer here.  Mohar  also points out in \cite{mohar} that it follows from a result of Fisk in \cite{fisk} that every planar 3-connected cubic bipartite graph $G$ has $K'(G,3)=1$. We show (in Section \ref{sec:comp}) that for $G$ planar, bipartite, and cubic, $G$ has $K'(G,3)=1$.

The remainder of the paper proceeds as follows. Section \ref{sec:decomp}  introduces decompositions of cubic graphs along  2- or 3-edge cuts that preserve planarity and bipartiteness.  The  theorems in Section \ref{sec:mmcol}  use the edge-cut decompositions to combine and decompose 3-edge colorings. We also show that any edge-Kempe equivalence can avoid color changes at a particular vertex.  Then, in Section \ref{sec:comp} we compute $K'(G,3)$ in terms of the edge-cut decomposition of $G$, and exhibit infinite families of simple nonplanar bipartite cubic graphs with a range of numbers of edge-Kempe equivalence classes.  

\section{Decompositions of Cubic Graphs}\label{sec:decomp}

Any   3-edge cut of a cubic graph may be used to decompose a cubic graph $G$ into two cubic graphs $G_1, G_2$ as follows.  For 3-edge cut  $E_C=\{(s_{11}s_{21}), (s_{12}s_{22}), (s_{13}s_{23})\} $ where vertices $s_{1j}$ are on one side of the cut and $s_{2j}$ on the other,  let the induced subgraphs of $G\setminus E_C$ separated by $E_C$ be $G_1^{\prime}, G_2^{\prime}$. Then for $i=1,2$ define $G_i$ by $V(G_i) = V(G_i^{\prime}) \cup v_i$ and $E(G_i)= E(G_i^{\prime}) \cup  E_{C_i}$ where $E_{C_i} = \{(v_i s_{ij})|\ j=1,2,3\}$,
as is shown in Figure \ref{fig:3-edge-cut}. This decomposition will be written as $G = G_1\thinspace\includegraphics[scale=.3]{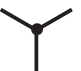}\thinspace G_2$.%, and we will say $G$ results from {\em melding} $G_1, G_2$.

\begin{figure}[htp]
\begin{center}
\includegraphics[scale=.7]{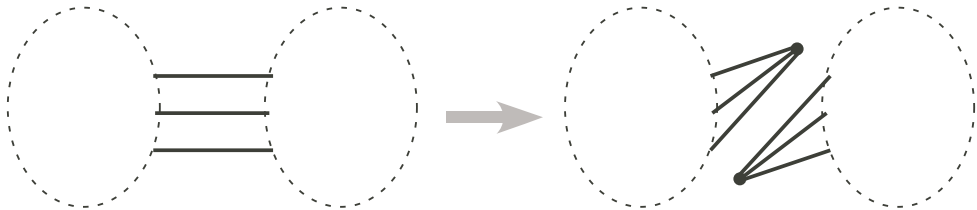}
\caption{Decomposing a graph over a 3-edge cut.}
\label{fig:3-edge-cut}
\end{center}
\end{figure}

A similar decomposition is defined analogously for a 2-edge cut of a cubic graph. Here $G$ has 2-edge cut $E_C= \{ (s_{11}s_{21}), (s_{12}s_{22})\}$ and for $i=1,2$ we define $G_i$ by $V(G_i) = V(G_i^{\prime})$ and 
 $E(G_i)= E(G_i^{\prime}) \cup  e_i$ where $e_i = \{(s_{i1} s_{i2})\}$.   This decomposition will be written as $G=G_1\thinspace\includegraphics[scale=.3]{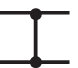}\thinspace G_2$. 
 
 For both of these decompositions,  we say  the edge cut is nontrivial if  both $G_1$ and $G_2$ have fewer vertices than $G$.  Using nontrivial edge cuts, we may decompose a cubic graph $G$ into a set of smaller graphs $\{ G_i\}$ where each $G_i$ has no nontrivial edge cuts (but may have additional multiple edges).  

Notice that these decompositions are reversible, though not uniquely so.
% that is, we can combine any two cubic graphs $G_1, G_2$ by distinguishing one vertex in each ($v_1, v_2$ respectively) and  melding the neighborhoods around $v_1$ and $v_2$ into a 3-edge cut of a larger cubic graph.  
Consider two cubic graphs $G_1, G_2$. Form $G_1\thinspace\includegraphics[scale=.3]{Y.eps}\thinspace G_2$ by distinguishing a vertex on each ($v_1, v_2$ respectively) and identifying the edges incident to 
$v_1$ with the edges incident to $v_2$. \emph{A priori}, there are many ways 
to choose $v_1, v_2$ and many ways to identify their incident edges.   We will abuse the notation $G_1\thinspace\includegraphics[scale=.3]{Y.eps}\thinspace G_2$  by using it to denote a 
particular one of these many choices. Similarly, $G_1\thinspace\includegraphics[scale=.3]{H.eps}\thinspace G_2$ can be formed by choosing an   edge  $e_i= (s_{i1}s_{i2})$ from each $G_i$, deleting $e_i$, and then adding the edges $\{ (s_{11}s_{21}), (s_{12}s_{22})\}$.  Note that constructing  $G_1\thinspace\includegraphics[scale=.3]{H.eps}\thinspace G_2$  is equivalent to cutting an edge of $G_2$ and inserting it into a single edge of $G_1$.

\begin{lemma}\label{lem:plan}  Let $G$ be a cubic graph.  If $G=G_1\thinspace\includegraphics[scale=.3]{Y.eps}\thinspace G_2$ or $G=G_1\thinspace\includegraphics[scale=.3]{H.eps}\thinspace G_2$, then $G$ is planar if and only if $G_1$ and $G_2$ are planar.
\end{lemma}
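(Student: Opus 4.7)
The plan is to handle the two directions of the biconditional separately. For the forward direction—$G$ planar implies each $G_i$ is planar—the idea is that each $G_i$ is produced from $G$ by planarity-preserving operations. In the Y-decomposition, $G_i$ is the minor of $G$ obtained by contracting the connected subgraph $G_{3-i}'$ to the single vertex $v_i$. In the H-decomposition, first contract $G_{3-i}'$ to a single vertex $w$; since $w$ then has precisely the two cut-edges incident, it has degree two, and suppressing $w$ produces the edge $e_i=(s_{i1}s_{i2})$. Both contraction of a connected subgraph and suppression of a degree-two vertex preserve planarity, so $G_i$ is planar whenever $G$ is.

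For the converse, the plan is to build a planar embedding of $G$ from planar embeddings of $G_1$ and $G_2$. View each $G_i$ as embedded on the sphere $S^2$, and, by choice of the outer face, arrange that $v_i$ (respectively $e_i$) lies inside a small closed disk $D_i \subset S^2$ that meets the embedding only in $v_i$ (resp.\ $e_i$) and in short initial segments of the three edges $v_i s_{ij}$ (resp.\ the two endpoints of $e_i$). Each such segment crosses $\partial D_i$ in one marked point $p_{ij}$, producing three marked points on $\partial D_i$ in the Y-case and two in the H-case. Deleting the open interior of each $D_i$ together with the contained vertex or edge leaves a planar disk containing $G_i'$ with dangling half-edges ending at the $p_{ij}$ on $\partial D_i$. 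Now glue the two punctured spheres along their boundary circles via a homeomorphism sending $p_{1j}$ to $p_{2j}$. The resulting surface is again $S^2$, and the paired half-edges splice into the cut edges, producing an embedding of $G$ on $S^2$, i.e., planarity of $G$.

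The main obstacle is ensuring that the gluing can realize the specific identification $s_{1j}\leftrightarrow s_{2j}$ prescribed by the decomposition. This requires the cyclic order of $p_{11},p_{12},p_{13}$ along $\partial D_1$ to be opposite to that of $p_{21},p_{22},p_{23}$ along $\partial D_2$, relative to the orientation reversal induced by the gluing of two disks. Three distinct points admit exactly two cyclic orders on a circle, and reflecting a planar embedding reverses the cyclic order at every vertex; thus replacing the embedding of $G_2$ by its mirror image (if necessary) always achieves compatibility. The H-case raises no analogous obstacle, since only two marked points per disk are involved and any cyclic-order issue is vacuous.
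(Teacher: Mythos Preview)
Your proof is correct and, for the converse direction, essentially matches the paper's: both put the distinguished vertex or edge on the boundary of a disk, excise it, and glue the two remaining disks into a sphere, with your ``replace the embedding of $G_2$ by its mirror image'' being exactly the paper's ``perhaps flipping one over.'' Your treatment is in fact more careful about the cyclic-order obstruction in the Y-case.

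The forward direction is where you diverge. The paper argues directly with the spherical embedding: removing $E_C$ leaves $G_1',G_2'$ sitting in disjoint disks, and the low-degree vertices lie on the outer face, so the extra vertex $v_i$ or edge $e_i$ can be drawn there without crossings. You instead observe that $G_i$ is a minor of $G$ (contract $G_{3-i}'$ to a point, and in the H-case suppress the resulting degree-two vertex), and invoke closure of planarity under minors. Your route is slicker and requires no picture, but it does use that $G_{3-i}'$ is connected so that contraction yields a single vertex; this holds whenever $E_C$ is a \emph{minimal} 2- or 3-edge cut, which is the case of interest throughout the paper, so the assumption is harmless here. The paper's embedding argument, by contrast, does not need connectivity of the pieces but is correspondingly more hands-on.
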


\begin{proof}  
Suppose that $G$ has a cellular embedding on the sphere.  Then the removal of an edge cut $E_C$ separates $G$ into two subgraphs, $G_1^{\prime}, G_2^{\prime}$  embedded on the sphere, each of which is contained in one of two disjoint discs $D_1,D_2$. 
%The problem is that we need to have all three of these degree-2 vertices on the same face in order to join them up.  ARgh!  but they have to be, or else there would have been edges crossing.  'sokay.
Note that the resulting  degree-1 and degree-2 vertices of each subgraph are on its outer face (relative to $D_i$) 
 as in Figure \ref{fig:discs}.
\begin{figure}[htp]
\begin{center}
\includegraphics[scale=.7]{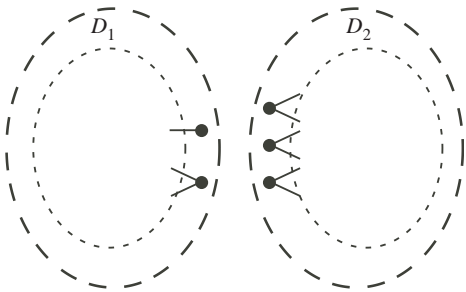}
\caption{ A sample configuration of planar $G_1^{\prime}, G_2^{\prime}$.}
\label{fig:discs}
\end{center}
\end{figure}
If $E_C$ was a 2-edge cut,  edges may be added on the outside face that join these vertices to create planar $G_i$.  If $E_C$ was a 3-edge cut, 
  add  vertices $v_1, v_2$  on the outside faces of discs $D_1, D_2$ respectively, and join $v_i$ to the degree-1 and degree-2 vertices in $D_i$ to create planar $G_i$.

Conversely,   
%If we can replace the three removed edges without inducing crossings, then this completes our proof: Contracting $D_2$ (resp. $D_1$) to a point produces a planar drawing of $G_1$ (resp. $G_2$) from that of $G$. If on the other hand we had 
spherical embeddings of $G_1$ and $G_2$ may be converted to planar drawings with distinguished vertices $v_1, v_2$  or edges $e_1, e_2$ on the outside faces of discs $D_1, D_2$ respectively. 
Removing $v_1, v_2$ (resp. $e_1, e_2$) produces $G$ with three edges (resp. two edges) of a cut missing.  Any desired pairing of the vertices may be completed on a sphere without edges crossing by using judicious placement of $D_i$ (and perhaps flipping one over). This will result in $G_1 \thinspace\includegraphics[scale=.3]{Y.eps}\thinspace G_2$ (resp. $G_1\thinspace\includegraphics[scale=.3]{H.eps}\thinspace G_2$).

%Removing $e_1, e_2$ produces $G$ with 2 edges missing. Either desired pairing of the endpoints of these may be completed on a sphere without the edges crossing to give $G_1\thinspace\includegraphics[scale=.3]{H.eps}\thinspace G_2$. Similarly,  removing $v_1, v_2$ produces $G$ with three edges (of a cut) missing.  Any desired pairing of the vertices may be completed on a sphere without edges crossing by using judicious placement of $D_i$ (and perhaps flipping one over). This will result in $G_1 \thinspace\includegraphics[scale=.3]{Y.eps}\thinspace G_2$. 

\end{proof}

\begin{lemma}\label{lem:bip} Let $G$ be a cubic graph.  If  $G=G_1\thinspace\includegraphics[scale=.3]{H.eps}\thinspace G_2$, or $G=G_1\thinspace\includegraphics[scale=.3]{Y.eps}\thinspace G_2$, then $G$ is bipartite if and only if $G_1$ and $G_2$ are bipartite. 

\end{lemma}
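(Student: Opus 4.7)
The plan is to prove both directions separately, handling the two decomposition types in parallel since the arguments are structurally identical.

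For the forward direction, I assume bipartitions $(A_i, B_i)$ of $G_1$ and $G_2$. In the 3-edge cut case, the vertex $v_i$ lies in one part (say $A_i$), which forces $s_{i1}, s_{i2}, s_{i3}$ all into $B_i$; in the 2-edge cut case, the edge $e_i = (s_{i1}s_{i2})$ forces $s_{i1}$ and $s_{i2}$ into opposite parts of $G_i$. To bipartition $G$, I take the restrictions of each $(A_i, B_i)$ to $V(G_i^\prime)$ and, if necessary, interchange $A_2$ with $B_2$ so that each cut edge $(s_{1j}s_{2j})$ joins a vertex on the $A$-side to one on the $B$-side. All internal edges of each $G_i^\prime$ remain properly 2-colored since they inherit their coloring from the bipartition of $G_i$.

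For the reverse direction, $G_i^\prime$ inherits a bipartition $(A \cap V(G_i^\prime),\, B \cap V(G_i^\prime))$ from $G$. To extend this to $G_i$, I need all of $s_{i1}, s_{i2}, s_{i3}$ to lie in a common part (3-edge cut case) or $s_{i1}, s_{i2}$ to lie in opposite parts (2-edge cut case). I establish this by a counting argument. Let $a_i = |A \cap V(G_i^\prime)|$ and $b_i = |B \cap V(G_i^\prime)|$, and let $\alpha_i$, $\beta_i$ denote the number of cut edges whose $G_i^\prime$-endpoint lies in $A$, respectively in $B$. Since $G$ is cubic and bipartite, summing degrees on each side yields
\[
3a_i \;=\; |E(G_i^\prime)| + \alpha_i, \qquad 3b_i \;=\; |E(G_i^\prime)| + \beta_i,
\]
so $\alpha_i - \beta_i = 3(a_i - b_i) \equiv 0 \pmod 3$. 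Combined with $\alpha_i + \beta_i = |E_C| \in \{2,3\}$, the only possibilities are $(\alpha_i, \beta_i) \in \{(3,0),(0,3)\}$ in the 3-edge cut case and $(\alpha_i, \beta_i) = (1,1)$ in the 2-edge cut case, which is exactly what is required.

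The main obstacle is the modular step in the reverse direction: without it, nothing \emph{a priori} prevents a 3-edge cut from having, say, two $A$-endpoints and one $B$-endpoint on one side, which would obstruct adjoining $v_i$ to a single part. The remainder is routine bookkeeping mirroring Lemma \ref{lem:plan}.
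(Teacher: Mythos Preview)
Your proof is correct and follows essentially the same approach as the paper: a parity/modular counting argument to show that the cut-edge endpoints on each side must all lie in a common part (3-edge cut) or in opposite parts (2-edge cut), and a straightforward extension of bipartitions in the other direction. Your labeling of ``forward'' and ``reverse'' is swapped relative to the paper (the paper treats $G$ bipartite $\Rightarrow$ $G_i$ bipartite first), and your counting is packaged a bit more uniformly via $\alpha_i-\beta_i\equiv 0\pmod 3$, but the substance is identical.
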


\begin{proof}
%If $G$ is a cubic  bipartite graph with nontrivial 2-edge cut then a simple counting argument shows that each cut edge must emanate from a different part on side $i$ of the cut. 
%{\textcolor{blue}{Here's the argument:  If there are $A_{j}$ vertices from part $j$ on side $1$, and both cut edges emanate from part $1$ then $3A_1-2 = 3A_2$ which is impossible.}} 
If $G$ is a cubic  bipartite graph with nontrivial 2-edge cut, then let there be $m_{j}$ vertices from part $j$ on side $1$; if both cut edges emanate from part $1$ then $3m_1-2 = 3m_2$ which is impossible.  Thus each cut edge must emanate from a different part on side $i$ of the cut, so both removing the edge cut and placing edges on each side maintains bipartition.

Suppose $G$ is  a bipartite cubic graph with  nontrivial 3-edge cut  $E_C$ and $G_1^{\prime}, G_2^{\prime}$ the induced subgraphs of $G\setminus E_C$.  For a bipartition of $G$ to descend naturally to bipartitions of $G_1, G_2$, 
the edges of $E_C$ must be incident  only to  vertices in $G_i^{\prime}$ that are  in the same 
part of $G$. Therefore, assume this is not the case and (without loss of generality) that two of the edges of $E_C$ 
are incident to one part of $G_1^{\prime}$ and the remaining edge of $E_C$ is incident to the other part of $G_1^{\prime}$. Let $G_1^{\prime}$ have $m_j$ vertices belonging to part $j$
of $G$. There are $3m_1-1$ edges emanating from part 1 of $G_1^{\prime}$ 
that must be incident to vertices of part 2 of $G_1^{\prime}$. On the other hand, there are $3m_2 - 2$ 
edges emanating from part 2 of $G_1^{\prime}$ that must be incident to vertices in part 1. Thus $3m_1 
- 1 = 3m_2 - 2$, which 
is impossible.

%the vertices of $P_i$ incident to $E_C$ must be all in the same part of $G$.  
%Therefore, assume this is not the case and two of the vertices are in one part and one is in the other part. $P_i$ has $m$ vertices belonging to one part of $G$ and $n$ vertices belonging to the other part of $G$.  There are $3m$ edges emanating from the $m$ vertices of one part of $G$, and without loss of generality, $3m-1$ of these edges are internal to $P_i$; similarly, there are $3n$ edges emanating from the $n$ vertices of the other part, of which $3n-2$ are internal to $P_i$.  However, $P_i$ has a total of $3m-1=3n-2$ internal edges, which means that either $m$ or $n $ is not an integer.  That's a contradiction.  

Conversely, if  $G_1, G_2$ are bipartite,  with distinguished $e_1=s_{11}s_{12}, e_2=s_{21}s_{22}$ for the purpose of forming  $G_1\thinspace\includegraphics[scale=.3]{H.eps}\thinspace G_2$, then the bipartition of $G_1$ extends to $G_1\thinspace\includegraphics[scale=.3]{H.eps}\thinspace G_2$ by assigning $s_{12}$ (resp. $s_{22}$) to the opposite part as $s_{11}$ (resp. $s_{21}$).  Similarly, 
if  $G_1, G_2$ are bipartite,  with distinguished $v_1, v_2$ for the purpose of forming  $G_1\thinspace\includegraphics[scale=.3]{Y.eps}\thinspace G_2$, then  use the bipartition of $G_1$ and assign $v_2$ to the opposite part as $v_1$ to induce a bipartition of $G_1\thinspace\includegraphics[scale=.3]{Y.eps}\thinspace G_2$.
\end{proof}

\begin{theorem}\label{thm:2-not-3decomp} 
A cubic graph $H$ that is 2-connected but not 3-connected  may be decomposed  via $\thinspace\includegraphics[scale=.3]{H.eps}\thinspace$ into a set of cubic loopless graphs $\{H_i\}$ where each $H_i$ is  3-connected.
%the graph $W$ shown in Figure \ref{fig:w},
 %or the multi-graph consisting of two vertices and three parallel edges. 
\end{theorem}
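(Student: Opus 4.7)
The plan is to prove the theorem by strong induction on $|V(H)|$. Since $H$ is cubic and 2-connected but not 3-connected, I will first argue that $H$ must actually have a 2-edge cut: given a 2-vertex cut $\{u,v\}$ of $H$ with components $C_1,\ldots,C_k$ of $H-\{u,v\}$, a short case analysis on the multiplicities $a_i$ (resp.\ $b_i$) of edges from $u$ (resp.\ $v$) into $C_i$---using the constraints $\sum_i a_i=3-e_{uv}$, $\sum_i b_i=3-e_{uv}$, and $a_i+b_i\geq 2$ (forced by $H$ being 2-edge-connected, since 2-connected cubic graphs have no bridge)---shows that for some choice of $S\in\{C_i,\{u\}\cup C_i,\{v\}\cup C_i\}$ the edge cut $E(S,\bar S)$ has exactly two edges. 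Fix such a cut $E_C=\{s_{11}s_{21},s_{12}s_{22}\}$; since $H$ is 2-edge-connected, the two sides $G_1',G_2'$ of $H\setminus E_C$ are each connected.

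Next I would apply the $\thinspace\includegraphics[scale=.3]{H.eps}\thinspace$-decomposition of Section~\ref{sec:decomp} to produce cubic graphs $H_1$ and $H_2$ with $|V(H_i)|<|V(H)|$. Looplessness of $H_i$ amounts to the claim that $s_{i1}\neq s_{i2}$ on each side; if a single vertex $w$ were the source of both cut edges on one side, then removing $w$ alone would separate $H$, making $w$ a cut vertex and contradicting 2-connectedness.

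The technical heart of the argument is showing that each $H_i$ is itself 2-connected, so that the inductive hypothesis can be applied. I will verify the equivalent statement that $H_i$ has no bridge. Consider a putative bridge $e$ of $H_1$. If $e$ is the newly added edge $s_{11}s_{12}$, then $H_1\setminus e=G_1'$, which is connected, a contradiction. If $e$ is an edge of $G_1'$ and $G_1'\setminus e$ is still connected, then so is $H_1\setminus e$. Otherwise $G_1'\setminus e$ has exactly two components $C_a,C_b$; if $s_{11}\in C_a$ and $s_{12}\in C_b$ (or vice versa), the added edge $s_{11}s_{12}$ reconnects them in $H_1\setminus e$; and if $\{s_{11},s_{12}\}\subseteq C_a$, then the other component $C_b$ has no edge of $H$ to $G_2'$ (the cut edges of $E_C$ emanate only from $s_{11}$ and $s_{12}$, both in $C_a$), so the single edge $e$ would itself be a bridge of $H$, contradicting 2-edge-connectedness.

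Finally, invoke the induction. Each $H_i$ is cubic, loopless, 2-connected, and has strictly fewer vertices than $H$; if $H_i$ happens already to be 3-connected, take it as a terminal piece, and otherwise the inductive hypothesis furnishes a further $\thinspace\includegraphics[scale=.3]{H.eps}\thinspace$-decomposition of $H_i$ into 3-connected cubic loopless graphs. The union of the decompositions for $i=1,2$ gives the desired decomposition of $H$, and termination follows from the strict decrease in vertex count. The main obstacle is the bridge analysis in the previous paragraph: without it one cannot be sure the decomposition preserves enough connectivity for the induction to continue.
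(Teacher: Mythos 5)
Your proposal is correct and follows essentially the same route as the paper: induct on the number of vertices, extract a 2-edge cut from a 2-vertex separating set, decompose via $\thinspace\includegraphics[scale=.3]{H.eps}\thinspace$, and recurse. The paper delegates the existence of the 2-edge cut to a figure enumerating the possible configurations and does not explicitly check that the pieces are loopless and 2-connected; your counting argument and bridge analysis supply exactly those omitted details.
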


\begin{proof}  The proof is inductive on the number of vertices of $H$. Because $H$ is 2-connected but not 3-connected, there exists a 2-vertex separating set. Figure \ref{fig:2-vert-cut} shows the three possible edge configurations for a 2-vertex separating set of a cubic graph, along with (at top) associated 2-edge cuts. 
\begin{figure}[htp]
\begin{center}
\includegraphics[scale=.5]{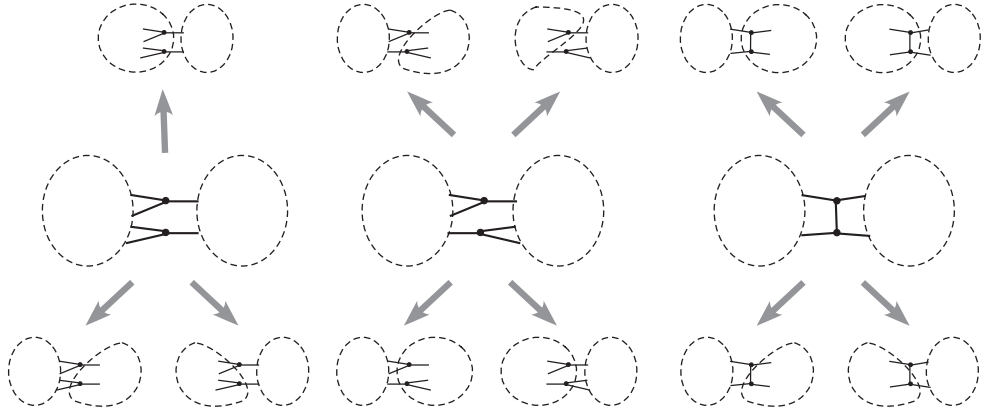} 
\caption{ 2-vertex separating sets with associated 2-edge cuts (top) and 3-edge cuts (bottom).} 
\label{fig:2-vert-cut}
\end{center}
\end{figure}
Each 2-edge cut can be used to form
 $H= H_1\thinspace\includegraphics[scale=.3]{H.eps}\thinspace H_2$, and $\vert H_j\vert < \vert H\vert$ so the inductive hypothesis holds for $H_j$.

\end{proof}

%\begin{figure}[htp]
%\begin{center}
%\includegraphics[scale=.85]{w.eps}
%\caption{The exceptional 2-connected, but not 3-connected, cubic graph $W$.}
%\label{fig:w}
%\end{center}
%\end{figure}

It is worth noting that while the decomposition can create multiple edges, any multiple edge in a cubic graph will be associated
with a 2-edge cut. Thus the final set of $H_j$ will be composed of theta graphs, and graphs with no multiple edges. 

\begin{corollary} The  $\thinspace\includegraphics[scale=.3]{H.eps}\thinspace$ decomposition of 2-connected  cubic graphs  given by Theorem \ref{thm:2-not-3decomp} preserves both planarity and bipartiteness.
\end{corollary}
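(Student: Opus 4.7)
The plan is to observe that the decomposition guaranteed by Theorem \ref{thm:2-not-3decomp} is obtained by a finite sequence of single $\thinspace\includegraphics[scale=.3]{H.eps}\thinspace$ splits along 2-edge cuts, and then to invoke Lemmas \ref{lem:plan} and \ref{lem:bip} inductively. More concretely, the proof of Theorem \ref{thm:2-not-3decomp} writes $H = H_1\thinspace\includegraphics[scale=.3]{H.eps}\thinspace H_2$ with $|H_j|<|H|$, and then (implicitly) recurses on each $H_j$ until every piece is 3-connected; the corollary is about the union of all leaves of this recursion.

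First I would make the iterative structure explicit: induct on $|H|$. In the base case, $H$ is already 3-connected, so the ``decomposition'' is just $\{H\}$ and there is nothing to prove. For the inductive step, take the first split $H = H_1\thinspace\includegraphics[scale=.3]{H.eps}\thinspace H_2$ from the proof of Theorem \ref{thm:2-not-3decomp}. By Lemma \ref{lem:plan}, $H$ is planar iff both $H_1$ and $H_2$ are planar; by Lemma \ref{lem:bip}, the same equivalence holds for bipartiteness. Since $|H_j|<|H|$, the inductive hypothesis applies to each $H_j$: the further $\thinspace\includegraphics[scale=.3]{H.eps}\thinspace$ decomposition of $H_j$ into 3-connected pieces preserves planarity and bipartiteness. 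Concatenating the two sub-decompositions gives the desired conclusion for $H$.

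The only subtlety worth checking is that the intermediate $H_j$ satisfy the hypotheses needed to keep applying Theorem \ref{thm:2-not-3decomp} and its supporting lemmas, namely that each $H_j$ is a cubic graph (possibly with multiple edges but no loops) and is 2-connected. The first holds by construction of $\thinspace\includegraphics[scale=.3]{H.eps}\thinspace$ (the two new edges $e_i$ each add degree 1 to the two cut-endpoints, restoring cubicity), and the second holds because a 2-edge cut in a 2-connected graph separates it into two pieces that are themselves 2-connected after the edge $e_i$ is inserted, by a standard argument about how edge cuts interact with vertex connectivity in cubic graphs.

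I do not anticipate a hard step: the content is really just ``planarity and bipartiteness are preserved under a single $\thinspace\includegraphics[scale=.3]{H.eps}\thinspace$, hence under any finite iteration.'' If anything, the main thing to be careful about is to phrase the induction cleanly so that the iterated nature of the Theorem \ref{thm:2-not-3decomp} decomposition is visible, rather than trying to argue about the full set $\{H_i\}$ in one stroke.
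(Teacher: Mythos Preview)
Your proposal is correct and takes essentially the same approach as the paper: the paper's proof is the one-line ``This follows from Lemmas \ref{lem:plan} and \ref{lem:bip},'' and you have simply made the implicit induction over the iterated $\thinspace\includegraphics[scale=.3]{H.eps}\thinspace$ splits explicit. Your additional remarks about the intermediate $H_j$ remaining cubic, loopless, and 2-connected are not strictly needed for the corollary itself (they are part of what Theorem~\ref{thm:2-not-3decomp} already guarantees), but they do no harm.
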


\begin{proof}
This follows from Lemmas \ref{lem:plan} and \ref{lem:bip}.
\end{proof}

An alternative decomposition using the $\thinspace\includegraphics[scale=.3]{Y.eps}\thinspace$ product can also be found. This is because every 2 vertex separating set is also associated with a 3-edge cut as seen in Figure \ref{fig:2-vert-cut}(bottom).
This decomposition also preserves planarity and bipartiteness.  

\section{Manipulating and Composing Colorings}\label{sec:mmcol}

We begin by showing that we can fix the colors on the edges incident to a given vertex, and  accomplish any sequence of edge-Kempe switches without changing the fixed colors.  As a result, representatives of all edge-Kempe equivalence classes will be present in the set of colorings with fixed colors at a vertex.   The following theorem holds for all base graphs $G$,  not just cubic graphs, and all $n\geq \chi '(G)$. 

   %Note that if $p=(a,b)$ is a transposition and $\sigma$ any permutation then   $(\sigma (a), \sigma (b) ) = \sigma p \sigma ^{-1}$.

\begin{theorem}\label{thm:fix} %Suppose $G$ is an  $\chi'$-edge colorable  graph. 
If $c\sim d$ are two  proper edge colorings of a loopless graph $G$, and there exists a vertex $v$ such that $c(e_i) = d(e_i)$ for all $e_i$ incident to $v$, then there exists a sequence of edge-Kempe switches from  $c$ to  $d$   that never change the colors on the edges incident to $v$.
\end{theorem}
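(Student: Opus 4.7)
The plan is induction on the length $k$ of a given edge-Kempe sequence $c = c_0 - c_1 - \cdots - c_k = d$. The base case $k=0$ is immediate. For the inductive step, I would distinguish two scenarios based on the first switch $c_0 - c_1$.

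First, suppose $c_0 - c_1$ is a swap along an $(a,b)$-Kempe chain that contains no edge at $v$. Then $c_1$ still agrees with $d$ on every edge incident to $v$. The inductive hypothesis applied to the length-$(k-1)$ sequence $c_1 - \cdots - c_k$ yields a $v$-preserving sequence from $c_1$ to $d$, and prepending $c_0 - c_1$ completes this case.

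Second, suppose the first switch is a swap along an $(a,b)$-chain $C$ that does contain an edge at $v$. Such a switch necessarily alters some color at $v$: it swaps the $a$- and $b$-edges at $v$ when both are present, or it converts the unique $a$- or $b$-edge at $v$ to the opposite color when only one is present. Since $c$ and $d$ agree at $v$, later switches in the sequence must collectively undo these alterations. My plan here is to run a secondary induction on the number $m$ of switches in the sequence that involve an edge at $v$. The reduction step pairs the first $v$-involving switch with the next $v$-involving switch that restores one of $v$'s colors, and replaces the pair (together with any intervening $v$-avoiding switches) by a new sub-sequence in which neither paired switch touches $v$ while the effect on the remainder of $G$ is preserved. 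This would lower $m$ by at least one at each step, terminating at the first case.

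The main obstacle is implementing this pair-cancellation. An edge-Kempe switch acts globally along a chain, and the set of available chains after a switch depends on the entire new coloring, so silently excising two carefully chosen switches without disturbing the remaining steps is delicate. The key technical claim to establish is that the two paired chains, restricted to the portions lying in $G$ minus the edges at $v$, decompose into Kempe sub-chains in an appropriate intermediate coloring, and that swapping those restricted sub-chains (possibly after first commuting the intervening $v$-avoiding switches past them) produces the same cumulative re-coloring on $G\setminus\{e_i\}$ as the original paired pair. Making this rigorous requires a case analysis based on whether both $a$- and $b$-edges are present at $v$ (so that $C$ passes through $v$ as a path or cycle) or just one is (so that $v$ is an endpoint of $C$), plus parallel analyses for the second paired chain; verifying that each resulting sub-chain is maximal and that the proposed replacement sequence consists of genuine edge-Kempe switches is where the bulk of the technical work will lie.
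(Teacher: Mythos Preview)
Your proposal is a plan rather than a proof, and the plan in Case 2 has a genuine gap. The pairing scheme pairs the first $v$-involving switch with ``the next $v$-involving switch that restores one of $v$'s colors'' and then speaks of ``commuting the intervening $v$-avoiding switches past them.'' But nothing guarantees that the intervening switches are $v$-avoiding: between the first $v$-touching switch and the one that eventually restores a color at $v$ there may be several other $v$-touching switches (for instance an $(a,b)$-swap followed by a $(b,c)$-swap followed by an $(a,c)$-swap at $v$), and your scheme says nothing about those. Even when the intervening switches do avoid $v$, Kempe switches do not commute: once you alter the two paired switches, the chains used by the intervening steps need no longer be maximal two-colored chains in the new intermediate colorings, so the rest of the sequence is not automatically valid. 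Establishing that it is would amount to proving the theorem.

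The paper sidesteps pairing entirely with a one-step local replacement. If a switch $s_i$ swaps colors $\{a,b\}$ on a chain $t_i$ through $v$, replace it by swapping $\{a,b\}$ on \emph{every other} $\{a,b\}$-chain. The resulting coloring differs from $o_{i+1}$ only by the global transposition of $a$ and $b$. One then maintains an accumulated permutation $\sigma_i$ and relabels each subsequent switch by $\sigma_i$, so that at every stage the modified coloring is exactly $\sigma_i\circ o_i$; this is verified edge by edge in five short cases. Since $c$ and $d$ agree at $v$, the final permutation $\sigma_m$ fixes every color appearing at $v$; when $n\le\deg(v)+1$ this forces $\sigma_m$ to be the identity, and otherwise a few global swaps among colors absent from $v$ finish the job. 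This complement-and-track idea is the missing ingredient: it handles each offending switch independently, with no need to locate a matching partner or to commute anything.
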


%The restriction to cubic graphs is unnecessary in Theorem \ref{thm:fix}; it holds for any $\Delta$-edge colorable regular graph.  However, for ease in communicating the proof, we give it only for cubic graphs.

Recall that $o_i-o_{i+1}$ is the notation for two colorings that differ by exactly one  edge-Kempe switch. It will be useful to have a further notation for the switch itself. Let $s_i=(\{p_{i_1}, p_{i_2}\}, t_i)$ where $\{p_{i_1}, p_{i_2}\}$ is the pair of colors to be switched on the chain $t_i$  of $G$. Then write  $o_i-_{s_{i}}o_{i+1}$,  if $o_{i+1}$ is obtained from $o_i$ by switching colors $\{p_{i_1}, p_{i_2}\}$ on chain $t_i$. Considering $S_n$ as acting on  the set of colors $\{1,\dots,n\}$, let $\pi_i\in S_n$ be the transposition $\pi_i(p_{i_1}) = p_{i_2}, \pi_i(p_{i_2}) = p_{i_1}$.  %The general notation %$\pi_i\pi_j$, or more generally $\sigma_1\sigma_2$  will be used for composition of elements of the symmetric group $S_n$. 

The idea of the proof is as follows.  Each time a switch $s_i= \left(\{p_{i_1},p_{i_2}\}, t_i \right)$ affects an edge incident to $v$,  replace it by making all other $\{p_{i_1},p_{i_2}\}$ switches in the graph. This results in a coloring  of the graph that is equivalent to the original, at the same stage, via a global color permutation. %(e.g., at the first switch this is just of $\{p_{i_1},p_{i_2}\}$). In subsequent switches,  i.e. for $k>i$, the colors to be swapped on $t_k$ will have changed. 
Therefore we need to track the colors to be switched on $t_k$, for $k>i$.
Each switch $s_k$ that does not affect an edge incident to vertex $v$ will be replaced by a switch, on the same chain $t_k$, of the colors that are currently on that chain. 
Our proof gives this precisely as an algorithm.

\begin{proof}   Suppose that $c= o_0-_{s_{0}}o_1- _{s_{1}}\dots -_{s_{n-1}}  o_{m}= d$, and there is at least one $i$ such that $v\in t_i$. 
Let $\sigma_0$ be the identity permutation. 
For $0\leq i\leq m-1$, replace $s_i$ with a set of edge-Kempe switches $\hat{s}_i$ as follows.  
Set  %$\hat{p}_i = \{ \sigma_{i}(p_{i_1}), \sigma_{i}(p_{i_2})\}$ so that 
$\hat{\pi}_{i} = \sigma_{i}\pi_i\sigma_{i}^{-1}$ so that $\hat{\pi}_{i}(\sigma_i(p_{i_1})) = \sigma_i(p_{i_2})$. \\
  If $v\not \in t_i$ then  set
   $\hat{s}_i =\left\{ \left(  \{p_{i_1},p_{i_2}\}, t_i\right)\right\}$ and  $\sigma_{i+1}=\sigma_{i}$.  \\
       If $v\in t_i$ then for $\{t_j\}$  the  edge-Kempe  chains of $o_{i}$  in colors $\{p_{i_1},p_{i_2}\}$,
       % let $\hat{p}_{i} = \sigma_{i}p_i\sigma_{i}^{-1}$; 
        set $\hat{s}_{i}  = \{( \{\sigma_i(p_{i_1}),\sigma_i(p_{i_2})\}, t_{j})|\ t_{j} \neq t_i\}$  and $\sigma_{i+1}=\sigma_{i}\pi_i$. 
        %{\color{blue} This is because  $\sigma_{i}=\hat{p_i}\sigma_{i-1} = (\sigma_{i-1}p_i\sigma_{i-1} ^{-1})\sigma_{i-1}$.}
  Note that the  set $\hat{s}_{i}$ may be empty if $t_i$ is the only $\{p_{i_1},p_{i_2}\}$ chain in $o_i$.

Define $\hat{o}_{i+1}$ to be the result of performing the  sets of switches $\hat{s}_1, \dots , \hat{s}_{i}$  to $c$.  
%To show that $\hat{o}_{i+1}\sim o_{i+1}$, it suffices to show that on each edge $e$, $\hat{o}_{i+1}(e)= \sigma_{i+1}o_{i+1}(e)$ as this 
%means that $\hat{o}_{i+1}$ differs from $o_{i+1}$ only by a global color permutation.  We proceed by induction and so assume that for $k\leq i$, $\hat{o}_k(e)=\sigma_ko_{k}(e)$.
We show that $\hat{o}_{i+1}$ and $o_i$ are equivalent up to a global color permutation  by $\sigma_i$.    Recall that $o_i (e)$ is the color assigned to edge $e$ by $o_i$.  We must show that  on each edge $e$, $\hat{o}_{i+1}(e)= \sigma_{i+1}o_{i+1}(e)$.  We proceed by induction and so assume that for $k\leq i$, $\hat{o}_k(e)=\sigma_ko_{k}(e)$.
% The inductive assumption is that  for $k\leq i$, $\hat{o}_k(e)=\sigma_ko_{k}(e)$.
 
There are 5 cases. 

\noindent First suppose $v\not \in t_i$.

\smallskip
Case 1a. If  $e\in t_i$ then $\hat{o}_{i+1} (e) = \hat{\pi}_i\hat{o}_i (e) $ because $\hat{\pi}_i$ is the action of switch $\hat{s}_i$.  By definition  of $\hat{\pi}_i$ and using the inductive hypothesis for  $\hat{o}_i$,  $\hat{\pi}_i\hat{o}_i (e)= (\sigma_{i}\pi_i\sigma_{i}^{-1})(\sigma_{i} o_i(e))$. Simplifying, we have $ \sigma_{i}\pi_i o_i(e) = \sigma_{i}o_{i+1}(e)$ (by action of $s_i$ on $o_i$), which, by definition of $\sigma_{i+1}$ in this case, equals $\sigma_{i+1}o_{i+1}(e) $ as desired. Similar reasoning justifies the remaining cases so we present them in an abbreviated fashion.

\smallskip
Case 1b.  If $e\not\in t_i$ then $\hat{o}_{i+1} (e) =\hat{o}_i (e) = \sigma_i o_i(e)=\sigma_{i+1} o_{i+1}(e)$.

\smallskip

\noindent Now suppose $v\in t_i$.

\smallskip
Case 2a. If $o_i(e) \not\in \{p_{i_1},p_{i_2}\}$ then $\hat{o}_{i+1} (e) =\hat{o}_i (e) = \sigma_i o_i(e)=(\sigma_i\pi_i ) o_i(e)% since $e$ is not of color $\{p_{i1}, p_i2}\}$.
=\sigma_{i+1} o_{i+1}(e)$.

\smallskip
Case 2b. If $o_i(e) \in \{p_{i_1},p_{i_2}\}$  and $e\in t_i$, then the color on $e$ does not change from $\hat{o}_i$ to $\hat{o}_{i+1}$ while it 
did change from $o_i$ to $o_{i+1}$. Thus, 
 $\hat{o}_{i+1} (e) =\hat{o}_i (e) = \sigma _i o_{i}(e)= \sigma _i \pi_i\pi_io_{i}(e)= \sigma _{i+1} o_{i+1}(e)$.
 
 \smallskip
Case 2c. If  $o_i(e) \in \{p_{i_1},p_{i_2}\}$  and $e\not\in t_i$, then the color on $e$ does change from $\hat{o}_i$ to $\hat{o}_{i+1}$ while it 
did  not change from $o_i$ to $o_{i+1}$. Thus, 
 $\hat{o}_{i+1} (e) =\hat{\pi}_i\hat{o}_i (e)= (\sigma_{i}\pi_i\sigma_{i}^{-1})(\sigma_{i} o_i(e))= (\sigma_{i}\pi_i) o_i(e) =\sigma_{i+1} o_{i+1}(e)$.

\smallskip
%The proof is completed by observing that  $ \sigma_n$  will be the identity permutation since
Finally, we consider $\hat{o}_m$ and compare it to $d$.
Note $c$ and $d$ have the same colors on $v$ by hypothesis, and  the total number of colors used in $d$ is $n$.  If  $n\leq \deg{(v)}+1$, then at most one color is not represented at $v$ and  $ \sigma_m$ must be the identity permutation; thus 
$\hat{o}_m= o_m =d$.
If $n > \deg{(v)}+1$, then it is possible that some colors that do not occur at $v$ are globally permuted  between $o_m$ and  $\hat{o}_n$. %That is, $\sigma_n$ may not be the identity permutation. 
In this case, additional edge-Kempe switches that globally permute colors can be applied to $\hat{o}_m$ %that result in only a global coloring permutation 
so that the coloring now matches $d$.

\end{proof}

%This proof works as long as $v$ is degree $d$ and we color with $d+1$ or fewer colors,  since then there is at most one color not represented at $v$ and since 
%any  global color permutation involves at least 2 colors, the theorem still holds. 
%{\color{blue}****  I believe this proof works regardless of the degree of $v$. The proof  of the last sentence is the only issue.****
% } 

This result shows  when counting the number of edge-Kempe equivalence classes  it is sufficient to consider only colorings of $G$ that are different up to global color permutation.   To make this observation precise requires
careful definition of an {\em edge-Kempe-equivalence graph} of a graph. This will be done in \cite{bc-H2}.
% To state this  more precisely requires some new definitions.
% Let $KE_r(G)$ be the graph whose vertex set corresponds to distinct edge colorings of $G$ in $r$ colors,
% and there is an edge $cd$ if and only if the two colorings $c$ and $d$ differ by one Kempe-switch. Let $\tilde{KE}_r(G)$ be a graph whose vertex set corresponds to **** here's where I cannot be precise still*** some canon order?

%\begin{corollary} For all $v, w$ where $\deg v= \deg w$, $I_v(G, n) \cong I_w(G, n)$.
%\end{corollary}  
% We have an example that shows this is false. but the degree seqs must be equal. 

\medskip

Returning to cubic graphs, we next consider how combining graphs affects $K'(G,n)$. 
Let  $G_1, G_2$ be two 3-edge-colorable cubic graphs and distinguish a vertex on each ($v_1, v_2$) for the purpose of forming $G_1 \thinspace\includegraphics[scale=.3]{Y.eps}\thinspace G_2$.  Recall that in addition to the choice of $v_1, v_2$, there are multiple ways their incident edges may be identified; by $G_1\thinspace\includegraphics[scale=.3]{Y.eps}\thinspace G_2$ we mean some 
particular set of these choices.  Let $\{x_1,x_2,x_3\}$ and $\{y_1, y_2, y_3\}$ be the ordered sets of edges in $G_1$ and $G_2$ that will be identified in $G_1\thinspace\includegraphics[scale=.3]{Y.eps}\thinspace G_2$.  Similarly, choose a distinguished edge in each graph  ($x\in G_1, y\in G_2$) for the purpose of forming $G_1\thinspace\includegraphics[scale=.3]{H.eps}\thinspace G_2$. 
The following several results relate 3-edge colorings of $G_1$ and $G_2$ to those of $G_1 \thinspace\includegraphics[scale=.3]{Y.eps}\thinspace G_2$ and $G_1\thinspace\includegraphics[scale=.3]{H.eps}\thinspace G_2$. % By a slight abuse of notation if $c$ is a coloring of $G_1$ then we can consider it also  as partial coloring of $G_1\thinspace\includegraphics[scale=.3]{Y.eps}\thinspace G_2$ where an edge has no color assigned only if neither of its incident vertices is in $G_1$.  
%It will be useful to note that this  sequence of results  holds whether  $G_1\thinspace\includegraphics[scale=.3]{Y.eps}\thinspace G_2$ results in a 2 vertex cut or 3 vertex cut. 

\begin{definition}\label{def:xs}
Let $c, d$ be proper edge colorings of $G_1, G_2$ respectively.  There exists a  proper coloring $\hat{d}$ of $G_2$ such that $c(x_i)= \hat{d}(y_i)$ for $i=1, 2, 3$,  and such that $d, \hat{d}$ are the same 
up to a permutation of the colors ($d\sim\hat{d}$). 
Define $(c\thinspace\includegraphics[scale=.3]{Y.eps}\thinspace d)$ to be the  proper coloring of 
$G_1 \thinspace\includegraphics[scale=.3]{Y.eps}\thinspace G_2$ given by\\ $(c\thinspace\includegraphics[scale=.3]{Y.eps}\thinspace d)(e) = \left\{
\begin{array}{c l}
  c(e) & \ {\rm if}\ e\in G_1 \\
  \hat{d}(e) & \ {\rm if}\ e\in G_2 \\
   c(e)=   \hat{d}(e)& \ {\rm if}\ e \ {\rm is\  the\  edge\  resulting \ from \ identifying \ }x_i \ {\rm and \ } y_i. \\
\end{array}
\right.$
Similarly, there exists a proper coloring $\tilde{d}$ of $G_2$ such that $c(x)= \tilde{d}(y)$  and such that $d, \tilde{d}$ are the same 
up to a  global permutation of the colors. 
Define $(c\thinspace\includegraphics[scale=.3]{H.eps}\thinspace d)$ to be the  proper coloring of 
$G_1 \thinspace\includegraphics[scale=.3]{H.eps}\thinspace G_2$ given by\\ $(c\thinspace\includegraphics[scale=.3]{H.eps}\thinspace d)(e) = \left\{
\begin{array}{c l}
  c(e) & \ {\rm if}\ e\in G_1 \\
  \tilde{d}(e) & \ {\rm if}\ e\in G_2 \\
   c(e)=   \tilde{d}(e)& \ {\rm if}\ e \ {\rm is\  one\  of\  the\  edges\ added  \ after \ deleting \ }x\ {\rm and \ } y. \\
\end{array}
\right.$

\end{definition}

Two cases of the 
Parity  Lemma (\cite{Isaacs}) will be useful. 
%must certainly be known, but we prove it here for completeness. (It is related in spirit to Theorem 10 of \cite{fisk0}.) In fact the proof  given here can be extended to show a similar result  for a $\Delta $ edge coloring  for  any odd $\Delta$ and odd cut. 

\begin{lemma}\label{lem:3col} Let $E_C$ be an edge cut of a of a 3-edge-colorable cubic graph $G$ and $c$ be any proper 3-edge coloring of $G$.  Then \\ (a) if $E_C$ is a 2-edge cut, then $c(E_C)$ uses exactly one color, and \\ (b) if $E_C$ is a 3-edge cut, then $c(E_C)$ uses all three colors.
%If $E_C$ is a
% 2-edge cut of a 3-edge-colorable cubic graph $G$ and $c$ any proper 3-edge coloring of $G$ then $c(E_C)$ uses exactly one color. 
% If $E_C$ is a
% 3-edge cut of a 3-edge-colorable cubic graph $G$ and $c$ any proper 3-edge coloring of $G$ then $c(E_C)$ uses all three colors. 
\end{lemma}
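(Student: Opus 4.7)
The plan is to use the standard parity/matching count that underlies the Parity Lemma, exploiting the fact that in a proper 3-edge coloring of a cubic graph each color class is a perfect matching. For the cut $E_C$, let $V_1,V_2$ be the vertex sets on the two sides, and for each color $i\in\{1,2,3\}$ let $b_i$ be the number of edges of color $i$ in $E_C$ and $a_i$ the number of color-$i$ edges with both endpoints in $V_1$. Since every vertex of $V_1$ is incident to exactly one edge of each color (the graph is cubic and properly 3-edge-colored), counting color-$i$ edge-endpoints lying in $V_1$ gives the identity $2a_i+b_i=|V_1|$. Hence $b_1\equiv b_2\equiv b_3\equiv |V_1|\pmod 2$.

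For part (a), $b_1+b_2+b_3=2$ with the three $b_i$ of common parity. They cannot all be odd, since then the sum would be odd; so all three are even, and the only nonnegative even triple summing to $2$ is a permutation of $(2,0,0)$. This forces $c(E_C)$ to use a single color. For part (b), $b_1+b_2+b_3=3$; they cannot all be even (the sum would be even), so all three are odd, and the only nonnegative odd triple summing to $3$ is $(1,1,1)$, so all three colors appear on $E_C$.

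There is essentially no serious obstacle: the only point to be careful about is that the identity $2a_i+b_i=|V_1|$ really does hold, which requires that each vertex in $V_1$ be incident to exactly one edge of each color — a direct consequence of the coloring being proper and $G$ being cubic and 3-edge-colorable. Once this is in hand, the two conclusions are just the observation that $2$ and $3$ admit unique decompositions into three nonnegative integers of a common parity. I would present the argument as a single paragraph establishing the parity identity, followed by the two short case analyses for parts (a) and (b).
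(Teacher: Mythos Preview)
Your argument is correct and is precisely the standard proof of the Parity Lemma: in a proper $3$-edge coloring of a cubic graph every vertex meets each color exactly once, so counting color-$i$ endpoints in $V_1$ yields $2a_i+b_i=|V_1|$, forcing all $b_i$ to have the same parity; the case analyses for $|E_C|=2$ and $|E_C|=3$ then follow immediately.

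The paper itself does not give a proof of this lemma; it simply states the result as two special cases of the Parity Lemma and cites Isaacs~\cite{Isaacs}. So there is no ``paper's own proof'' to compare against --- your write-up supplies exactly the argument that the citation points to. One small point worth making explicit in a final version: you are tacitly assuming that $E_C$ consists of \emph{all} edges between $V_1$ and $V_2$ (i.e., that $E_C=[V_1,V_2]$), which is how the paper uses the term ``edge cut'' in its $2$- and $3$-edge-cut decompositions, and which is what makes the endpoint count $2a_i+b_i=|V_1|$ valid.
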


%\begin{proof} 
%
%
% Let $E_C$ be a 3-edge cut of $G$. Let $G_1^{\prime} , G_2^{\prime}$ be the two subgraphs of $G$ left 
%after the removal of $E_C$, such that each edge of $E_C$ has one end in each of 
%$G_1^{\prime}$ and $G_2^{\prime}$.  $\vert G_i^{\prime} \vert$
%must be odd because  adding a vertex of degree 3 adjacent to 
%the subcubic vertices of $G_i^{\prime}$ gives a cubic  (multi-)graph. Suppose $c$ is a proper 3-edge 
%coloring of $G$ in colors $x,y,z$. Let $N_x, N_y,$ and $N_z$  be the number of edges of $E_C$ 
%colored $x,y,$ and $z$ respectively. Now $\vert G_i^{\prime} \vert - N_x, \vert G_i^{\prime} \vert - N_y,$ and $\vert G_i^{\prime} \vert - N_z$
% count the 
%number of vertices in $G_i^{\prime}$ incident to each particular color in $c(G_i^{\prime})$, and so must all be 
%even. But because $\vert G_i^{\prime} \vert$ is odd,  each of $N_x, N_y,$ and $N_z$ is odd, so $N_x= N_y=N_z=1$ as 
%desired. 
%\end{proof}

\begin{theorem}\label{thm:coldec} Every 3-edge coloring $f$ of $G = G_1 \thinspace\includegraphics[scale=.3]{Y.eps}\thinspace G_2$  (resp. $G = G_1 \thinspace\includegraphics[scale=.3]{H.eps}\thinspace G_2$) can be written as $c_1 \thinspace\includegraphics[scale=.3]{Y.eps}\thinspace d_1$ (resp. $c_1 \thinspace\includegraphics[scale=.3]{H.eps}\thinspace d_1$) where $c_1$ is some 3-edge coloring of $G_1$ and $d_1$ is some 3-edge coloring of $G_2$.  %Similarly, every 3-edge coloring $f$ of $G = G_1 \thinspace\includegraphics[scale=.3]{H.eps}\thinspace G_2$  can be written as $c_1 \thinspace\includegraphics[scale=.3]{H.eps}\thinspace d_1$ where $c_1$ is some 3-edge coloring of $G_1$ and $d_1$ is some 3-edge coloring of $G_2$.

\end{theorem}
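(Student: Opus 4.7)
The plan is to build $c_1$ and $d_1$ by restricting $f$ to each side of the edge cut and extending across the newly added vertex (or edge) using the colors that $f$ already assigns to the cut edges themselves. The key input is the Parity Lemma (Lemma \ref{lem:3col}), which pins down exactly which colors appear on a 2- or 3-edge cut of a proper 3-edge-colored cubic graph.

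For the 3-edge-cut case $G = G_1 \thinspace\includegraphics[scale=.3]{Y.eps}\thinspace G_2$ with cut edges $(s_{1j}s_{2j})$, $j=1,2,3$, I would let $c_1$ agree with $f$ on $G_1'$ and set $c_1(v_1 s_{1j}) := f(s_{1j}s_{2j})$, then define $d_1$ symmetrically on the $G_2$ side. By Lemma \ref{lem:3col}(b) the three values $f(s_{1j}s_{2j})$ realize all three colors, so the three edges at the new vertex $v_1$ receive three distinct colors; moreover, at each $s_{1j}$ the edge $v_1 s_{1j}$ plays exactly the role that its cut edge played in $G$, so $c_1$ is proper. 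The same reasoning works for $d_1$. Because $c_1$ and $d_1$ both assign the color $f(s_{1j}s_{2j})$ to the $j$th edge to be identified, the two colorings already agree on that identification — the $\hat d$ of Definition \ref{def:xs} can be taken to be $d_1$ itself — and hence $c_1 \thinspace\includegraphics[scale=.3]{Y.eps}\thinspace d_1 = f$.

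For the 2-edge-cut case $G = G_1 \thinspace\includegraphics[scale=.3]{H.eps}\thinspace G_2$ with cut edges $(s_{11}s_{21})$ and $(s_{12}s_{22})$, I would proceed analogously: take $c_1$ to equal $f$ on $G_1'$ together with $c_1(e_1) := f(s_{11}s_{21})$, and define $d_1$ symmetrically. Lemma \ref{lem:3col}(a) gives $f(s_{11}s_{21}) = f(s_{12}s_{22})$, so this single assigned color is simultaneously the one already seen at $s_{11}$ and at $s_{12}$ across the cut in $G$; propriety is preserved at both endpoints of $e_1$, and the two colorings again agree on the re-added cut, giving $c_1 \thinspace\includegraphics[scale=.3]{H.eps}\thinspace d_1 = f$.

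I do not expect a real obstacle: each extension is forced by $f$ restricted to the cut, and the Parity Lemma does the actual work — three distinct colors in the Y case exactly cover the three new edges at $v_1$, and one repeated color in the H case exactly covers the single new edge $e_1$. The only thing worth checking carefully is that no global color permutation is needed to realize Definition \ref{def:xs}, but by construction the match on the identified edges is on the nose, so the identity permutation suffices.
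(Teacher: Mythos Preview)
Your proposal is correct and follows the same approach as the paper: restrict $f$ to each side of the cut and invoke the Parity Lemma (Lemma~\ref{lem:3col}) to guarantee that the induced colorings at the new vertex $v_i$ (respectively, on the new edge $e_i$) are proper. Your write-up is in fact more detailed than the paper's, which simply notes that the cut colors are forced by Lemma~\ref{lem:3col} and that $f$ restricted to each $G_i$ is therefore proper.
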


\begin{proof}  Consider a 3-edge coloring $f$ of $G = G_1 \thinspace\includegraphics[scale=.3]{Y.eps}\thinspace G_2$. There is a 3-edge cut $E_C$ corresponding to the decomposition $G_1 \thinspace\includegraphics[scale=.3]{Y.eps}\thinspace G_2$.  By Lemma \ref{lem:3col}(b), each $e_i\in E_C$ must be a different color in $c$. Therefore considering $f$ on the edges of $G_1$ (and particularly at $v_1$), it is still a proper coloring $c_1$, and likewise $f$ considered on $G_2$ is a proper coloring $d_1$.   The result for $\thinspace\includegraphics[scale=.3]{H.eps}\thinspace$ is similarly an immediate corollary of Lemma \ref{lem:3col}.

\end{proof}

Implicit in the preceding results is the following.

\begin{corollary}   If $G =G_1\thinspace\includegraphics[scale=.3]{Y.eps}\thinspace G_2$  or $G=G_1\thinspace\includegraphics[scale=.3]{H.eps}\thinspace G_2$,  then $G$ is 3-edge colorable if and only if $G_1$ and $G_2$ are 3-edge colorable.
\end{corollary}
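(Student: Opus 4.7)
The corollary is almost immediate from the preceding two results, so the plan is essentially to cite them and check the matching condition.

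For the forward direction, suppose $G$ is 3-edge colorable with proper 3-edge coloring $f$. I would apply Theorem \ref{thm:coldec} directly: it asserts that $f$ decomposes as $c_1 \thinspace\includegraphics[scale=.3]{Y.eps}\thinspace d_1$ (respectively $c_1 \thinspace\includegraphics[scale=.3]{H.eps}\thinspace d_1$), where $c_1$ and $d_1$ are proper 3-edge colorings of $G_1$ and $G_2$ respectively. Hence both $G_1$ and $G_2$ are 3-edge colorable.

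For the reverse direction, let $c$ and $d$ be proper 3-edge colorings of $G_1$ and $G_2$. The plan is to invoke Definition \ref{def:xs} to produce $(c\thinspace\includegraphics[scale=.3]{Y.eps}\thinspace d)$ or $(c\thinspace\includegraphics[scale=.3]{H.eps}\thinspace d)$ as a proper 3-edge coloring of $G$. The one thing that needs verification is that the color-matching condition in Definition \ref{def:xs} can always be satisfied. In the $\thinspace\includegraphics[scale=.3]{Y.eps}\thinspace$ case, because $v_1$ and $v_2$ are cubic vertices, the three edges $x_1,x_2,x_3$ in $G_1$ and $y_1,y_2,y_3$ in $G_2$ each receive all three colors under any proper 3-edge coloring; hence there is a permutation in $S_3$ carrying the colors of $y_i$ to those of $x_i$, giving the desired $\hat{d}\sim d$. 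In the $\thinspace\includegraphics[scale=.3]{H.eps}\thinspace$ case, a transposition in $S_3$ suffices to align the single color $\tilde d(y)$ with $c(x)$.

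No step here is really an obstacle since all the work has been done in Theorem \ref{thm:coldec} and Definition \ref{def:xs}; the only care needed is the observation (built into the cubic setting) that permuting the three colors of $d$ always suffices to achieve agreement along the identified edges, which in turn makes the constructed coloring proper at every vertex of $G$.
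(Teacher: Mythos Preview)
Your proposal is correct and mirrors exactly what the paper intends: it simply states that the corollary is ``implicit in the preceding results,'' namely Theorem~\ref{thm:coldec} for the forward direction and Definition~\ref{def:xs} for the reverse. Your added remark that a permutation in $S_3$ always aligns the colors at the identified edges is precisely the observation built into Definition~\ref{def:xs}, so nothing further is needed.
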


Next we note  how edge-Kempe equivalences on the colorings of  $G_1$ and $G_2$ transfer to  edge-Kempe equivalences in combinations of these graphs.

\begin{lemma} \label{lem:star} Let 3-edge colorings $c_1 \sim c_2$ in $G_1$ and  $d_1 \sim d_2$ in $G_2$.  Then $(c_1 \thinspace\includegraphics[scale=.3]{Y.eps}\thinspace d_1) \sim (c_2 \thinspace\includegraphics[scale=.3]{Y.eps}\thinspace d_2)$ in $G_1 \thinspace\includegraphics[scale=.3]{Y.eps}\thinspace G_2$ and $(c_1 \thinspace\includegraphics[scale=.3]{H.eps}\thinspace d_1) \sim (c_2 \thinspace\includegraphics[scale=.3]{H.eps}\thinspace d_2)$ in $G_1 \thinspace\includegraphics[scale=.3]{H.eps}\thinspace G_2$.
\end{lemma}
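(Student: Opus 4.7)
The plan is to prove the lemma by transitivity, showing
$(c_1 \thinspace\includegraphics[scale=.3]{Y.eps}\thinspace d_1) \sim (c_2 \thinspace\includegraphics[scale=.3]{Y.eps}\thinspace d_1) \sim (c_2 \thinspace\includegraphics[scale=.3]{Y.eps}\thinspace d_2)$
in $G = G_1 \thinspace\includegraphics[scale=.3]{Y.eps}\thinspace G_2$, with the $\thinspace\includegraphics[scale=.3]{H.eps}\thinspace$ case handled in parallel. The two links in the chain are symmetric, so I would focus on the first. The core task is to convert the given edge-Kempe sequence from $c_1$ to $c_2$ in $G_1$ into a sequence of switches in $G$ that alters only the $G_1$-side edges.

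My first move is to replace $c_2$ by a globally color-permuted version $c_2^* = \sigma \circ c_2$ whose values on the three edges incident to $v_1$ agree with those of $c_1$. This is possible because Lemma \ref{lem:3col}(b) forces both $c_1$ and $c_2$ to use each of the three colors exactly once at $v_1$, so the required permutation of the color set is uniquely determined; since global color permutations are achievable by sequences of Kempe switches, $c_2^* \sim c_2$ in $G_1$. Theorem \ref{thm:fix}, applied to the equivalent pair $c_1 \sim c_2^*$ with fixed vertex $v_1$, then supplies a sequence $c_1 \to c_2^*$ in $G_1$ none of whose switches touches an edge at $v_1$.

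I would next verify that each such switch lifts to a valid edge-Kempe switch in $G$. Because the chain of each switch avoids $v_1$, it uses only edges of $G_1$ that are not incident to $v_1$, and those edges are present in $G$ as well. At every vertex $u$ on the chain, the multiset of colors of the edges incident to $u$ is the same in $G_1$ and in $G$: the only vertices whose incidence structure differs are the $s_{1j}$, and at each such vertex the cut edge of $G$ carries exactly the color of the corresponding $v_1$-edge of $G_1$. Maximality of the chain is therefore preserved and properness is maintained after switching, so the lifted switch changes only the $G_1$-side of the coloring while fixing the cut edges and the $G_2$-side. Concatenating the lifts gives $(c_1 \thinspace\includegraphics[scale=.3]{Y.eps}\thinspace d_1) \sim (c_2^* \thinspace\includegraphics[scale=.3]{Y.eps}\thinspace d_1)$ in $G$. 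A direct unpacking of Definition \ref{def:xs} then shows $(c_2^* \thinspace\includegraphics[scale=.3]{Y.eps}\thinspace d_1) = \sigma \circ (c_2 \thinspace\includegraphics[scale=.3]{Y.eps}\thinspace d_1)$ (the auxiliary $\hat{d}_1$ prescribed by the definition picks up $\sigma$ too, as does each cut edge), so a single global color permutation in $G$ closes the first equivalence. The analogous argument centered at $v_2$ in $G_2$ supplies the second equivalence, finishing the $\thinspace\includegraphics[scale=.3]{Y.eps}\thinspace$ case.

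The $\thinspace\includegraphics[scale=.3]{H.eps}\thinspace$ case follows the same plan with $v_1$ replaced by the endpoint $s_{11}$ of the distinguished edge $e_1$ (and $v_2$ by $s_{21}$). The point that needs extra care is checking that a chain of $G_1$ which avoids $s_{11}$ still lifts to a chain of $G$ that does not escape through the 2-edge cut. Let $p = c(e_1)$; by Lemma \ref{lem:3col}(a) both cut edges of $G$ are colored $p$. If the chain's color pair avoids $p$, then neither cut edge is eligible to appear on it; and if its color pair contains $p$, then a chain avoiding $s_{11}$ must also avoid $s_{12}$, since otherwise $e_1$ (the unique color-$p$ edge at $s_{12}$) would be available to extend the chain, contradicting maximality in $G_1$. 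Either way the lifted chain stays inside the $G_1$-side of $G$, and the rest of the argument runs as before. The main obstacle throughout is keeping the $\hat{d}$/$\tilde{d}$ bookkeeping of Definition \ref{def:xs} honest, so that the intermediate colorings $(c_2^* \thinspace\includegraphics[scale=.3]{Y.eps}\thinspace d_1)$ and $(c_2 \thinspace\includegraphics[scale=.3]{Y.eps}\thinspace d_1)$ really differ by a single global color permutation on $G$ rather than by some subtler twist.
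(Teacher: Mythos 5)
Your proof is correct and follows essentially the same route as the paper's: normalize $c_2$ (and symmetrically $d_2$) by a global color permutation so that the colorings agree at the junction, invoke Theorem~\ref{thm:fix} to obtain a switch sequence that never touches the junction vertex, transfer that sequence to the composed graph, and chain the two one-sided equivalences by transitivity. The only difference is that you explicitly verify the transfer step --- maximality of the lifted chains and, in the 2-edge-cut case, the argument that a chain avoiding $s_{11}$ cannot escape through the cut --- which the paper leaves implicit; this is a welcome elaboration rather than a divergent approach.
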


\begin{proof} Using the notation from Definition \ref{def:xs}, let $c_2'\sim c_2$  by global color permutation such that $c_2'(x_i) = c_1(x_i)$ for $i=1, 2, 3$.  By Theorem \ref{thm:fix},  there exists a sequence of edge-Kempe switches in $G_1$ that exhibits $c_1 \sim c_2'$ and that never changes the color of any edge incident to $v_1$. Similarly, define $\hat{d}'_2\sim\hat{d}_2\sim d_2$ such that there is a sequence of edge-Kempe switches in $G_2$ that exhibits $\hat{d}_1 \sim \hat{d}'_2$ and that never changes the color of any edge incident to $v_2$.  Then $(c_1 \thinspace\includegraphics[scale=.3]{Y.eps}\thinspace d_1)=  (c_1 \thinspace\includegraphics[scale=.3]{Y.eps}\thinspace \hat{d}_1)\sim (c_2' \thinspace\includegraphics[scale=.3]{Y.eps}\thinspace \hat{d}_1) \sim (c_2'\thinspace\includegraphics[scale=.3]{Y.eps}\thinspace \hat{d}_2') \sim (c_2 \thinspace\includegraphics[scale=.3]{Y.eps}\thinspace \hat{d}_2)=(c_2 \thinspace\includegraphics[scale=.3]{Y.eps}\thinspace d_2) $.

For the $\thinspace\includegraphics[scale=.3]{H.eps}\thinspace$ composition, assume without loss of generality that $c_1(x) = d_1(y)$.  Let $c_2''\sim c_2$ by global color permutation such that $c_2''(x)= c_1(x)$ and $d_2''\sim d_2$ by global color permutation such that $d_2''(y)= d_1(y)$. By Lemma \ref{lem:3col}, the two edges created after deleting $x, y$ will be assigned the same color in any proper 3-coloring of $G_1\thinspace\includegraphics[scale=.3]{H.eps}\thinspace G_2$, so fixing the color on one will also fix the color on the other.  Hence,  $(c_1 \thinspace\includegraphics[scale=.3]{H.eps}\thinspace d_1) \sim (c_2'' \thinspace\includegraphics[scale=.3]{H.eps}\thinspace d_1) \sim (c_2''\thinspace\includegraphics[scale=.3]{H.eps}\thinspace d_2'') \sim (c_2 \thinspace\includegraphics[scale=.3]{H.eps}\thinspace d_2)$.

 \end{proof}

% Suppose not. Then there exists a 3-edge cut  $E_C = \{e_1, e_2, e_3\}$ and a coloring $c$ of $G$ such that (Case 1) there are two edges of $E_C$ with color $x$ and one edge of color $y$, or (Case 2) there are three edges of $E_C$ with color $x$.  Before proceeding with the proof, let the two portions of $G$ separated by the removal of $E_C$ be denoted $P_1,P_2$.  Recall that because $G$ has an even number of vertices, $\vert P_1 \vert - \vert P_2\vert \equiv 0\pmod 2$ and so both of  $\vert P_1 \vert, \vert P_2\vert$ are even or both are odd. Also recall that any proper coloring of $G$ induces proper colorings of $ P_1 ,  P_2$.

%(Case 1):  Suppose  $\vert P_1 \vert, \vert P_2\vert$ are both even.  Then in $P_1$ there are an odd number of vertices ($\vert P_1 \vert-1$) that require color $y$, but because the edges of color $u$ form a perfect matching, this is a contradiction.  Likewise, suppose $\vert P_1 \vert, \vert P_2\vert$ are both odd.  Then in $P_1$ there are an odd number of vertices ($\vert P_1 \vert-2$) that require color $x$, but because the edges of color $x$ form a perfect matching, this is a contradiction.

%(Case 2):  Suppose  $\vert P_1 \vert, \vert P_2\vert$ are both even.  Then in $P_1$ there are an odd number of vertices ($\vert P_1 \vert-3$) that require color $x$, but as in Case 1 this is a contradiction.  Similarly, suppose $\vert P_1 \vert, \vert P_2\vert$ are both odd.  Then in $P_1$ there are an odd number of vertices that require colors $y, z$, but this again produces a contradiction.

\begin{lemma}\label{lem:unstar}  Let $G_1, G_2$ be 3-edge colorable cubic graphs with $G_1 \thinspace\includegraphics[scale=.3]{Y.eps}\thinspace G_2$  and $G_1\thinspace\includegraphics[scale=.3]{H.eps}\thinspace G_2$   particular 
compositions of the two. If $(c_1 \thinspace\includegraphics[scale=.3]{Y.eps}\thinspace d_1) \sim (c_2 \thinspace\includegraphics[scale=.3]{Y.eps}\thinspace d_2)$ in $G_1 \thinspace\includegraphics[scale=.3]{Y.eps}\thinspace G_2$  (resp. $(c_1 \thinspace\includegraphics[scale=.3]{H.eps}\thinspace d_1) \sim (c_2 \thinspace\includegraphics[scale=.3]{H.eps}\thinspace d_2)$ in $G_1 \thinspace\includegraphics[scale=.3]{H.eps}\thinspace G_2$)
then $c_1 \sim c_2$ in $G_1$ and  $d_1 \sim d_2$ in $G_2$. %Similarly, if $(c_1 \thinspace\includegraphics[scale=.3]{H.eps}\thinspace d_1) \sim (c_2 \thinspace\includegraphics[scale=.3]{H.eps}\thinspace d_2)$ in $G_1 \thinspace\includegraphics[scale=.3]{H.eps}\thinspace G_2$ 
%then $c_1 \sim c_2$ in $G_1$ and  $d_1 \sim d_2$ in $G_2$.

\end{lemma}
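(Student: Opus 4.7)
The plan is to \emph{project} a sequence of edge-Kempe switches in the composed graph down to a sequence in each factor. Treat the $\thinspace\includegraphics[scale=.3]{Y.eps}\thinspace$ case first; the $\thinspace\includegraphics[scale=.3]{H.eps}\thinspace$ case is parallel. Suppose $f_0 - f_1 - \cdots - f_m$ realises $(c_1 \thinspace\includegraphics[scale=.3]{Y.eps}\thinspace d_1) \sim (c_2 \thinspace\includegraphics[scale=.3]{Y.eps}\thinspace d_2)$ in $G = G_1 \thinspace\includegraphics[scale=.3]{Y.eps}\thinspace G_2$. For each $i$, define the \emph{restriction} $c^{(i)}$ of $f_i$ to $G_1$ by taking $f_i$ on edges of $G_1'$ and assigning to each edge at $v_1$ the color $f_i$ gives to the corresponding cut edge; Lemma \ref{lem:3col}(b) guarantees these three colors are distinct, so $c^{(i)}$ is a proper 3-edge coloring with $c^{(0)} = c_1$ and $c^{(m)} = c_2$. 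Define $d^{(i)}$ analogously on $G_2$.

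The key step is to show that the transition $f_i - f_{i+1}$ induces either a trivial move or a single edge-Kempe switch in $c^{(\cdot)}$ (and symmetrically in $d^{(\cdot)}$). Because $G$ is 3-edge colored and cubic, every vertex meets each color once, so every edge-Kempe chain is a cycle; by Lemma \ref{lem:3col}(b) the $\{p_1,p_2\}$-subgraph contains exactly two cut edges (the ones colored $p_1$ and $p_2$), so the chain $T$ used at step $i$ crosses the cut either zero or two times. Either $T \subseteq G_1'$ (project to the same chain in $G_1$; nothing happens in $G_2$); or $T \subseteq G_2'$ (symmetric); or $T$ uses both $\{p_1,p_2\}$-cut edges. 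In the last case, $T \cap G_1'$ is a path between the two $G_1'$-endpoints of these cut edges; closing it up through $v_1$ using the $v_1$-edges of colors $p_1$ and $p_2$ (available with exactly those colors in $c^{(i)}$ by construction) yields a $\{p_1,p_2\}$-cycle in $G_1$ whose switch produces $c^{(i+1)}$, and the parallel construction in $G_2$ produces $d^{(i+1)}$. Concatenating the nontrivial projected switches yields $c_1 \sim c_2$ in $G_1$ and $d_1 \sim d_2$ in $G_2$.

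The $\thinspace\includegraphics[scale=.3]{H.eps}\thinspace$ case proceeds identically after substituting Lemma \ref{lem:3col}(a): now both cut edges share a single color $p$, so only $\{p,q\}$- and $\{p,r\}$-chains can meet the cut and each crosses zero or two times; a cross-cut chain closes up in $G_i$ via the distinguished edge $e_i$, which the restriction gives color $p$, yielding a valid chain whose switch realises the projected step. The main obstacle is the bookkeeping in the cross-cut case: one must verify that at each intermediate step the relevant $v_1$-edges (or $e_1$) carry the correct colors so that the closure is a genuine edge-Kempe chain in $G_1$. This is exactly what the definition of $c^{(i)}$ (reading the $v_1$-edge colors off the cut edges of $f_i$) is designed to guarantee, so the argument goes through without further invocation of Theorem \ref{thm:fix}.
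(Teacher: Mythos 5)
Your proposal is correct and follows essentially the same route as the paper: reduce to a single switch (you do this by projecting each step of the sequence), observe that every edge-Kempe chain in a 3-edge-colored cubic graph is a cycle and hence meets the cut in zero or exactly two edges (using Lemma \ref{lem:3col} to identify which cut edges can be involved), and split a cross-cut chain into one chain in each factor by closing it up through $v_1$ (resp.\ $e_i$). The extra bookkeeping you supply about the well-definedness of the restrictions at intermediate steps is a correct elaboration of what the paper leaves implicit.
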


\begin{proof} %We proceed by contradiction.  Suppose that (without loss of generality) $c_1 \not\sim c_2$ in $G_1$. Then the edge-Kempe chains accomplishing $(c_1 \thinspace\includegraphics[scale=.3]{Y.eps}\thinspace d_1) \sim (c_2 \thinspace\includegraphics[scale=.3]{Y.eps}\thinspace d_2)$ must include some edge-Kempe chains using edges from $E_C$.  
It is sufficient to show this when  $(c_1 \thinspace\includegraphics[scale=.3]{Y.eps}\thinspace d_1)-_{s}  (c_2 \thinspace\includegraphics[scale=.3]{Y.eps}\thinspace d_2)$ and $(c_1 \thinspace\includegraphics[scale=.3]{H.eps}\thinspace d_1)-_{s}  (c_2 \thinspace\includegraphics[scale=.3]{H.eps}\thinspace d_2)$, where %$s=(\{p,q\}, t)$ as before
$s=(p,t)$ with $p$ a pair of colors and $t$ an edge-Kempe chain. If   $t \subset G_1$ or  $t\subset G_2$, then the lemma holds.  Otherwise,  $t \cap E_C \neq \emptyset$, and 
 $t$ must use exactly 2 edges of $E_C$ because every edge-Kempe chain of a proper 3-edge coloring of a cubic graph is a cycle.  The decomposition  $G_1 \thinspace\includegraphics[scale=.3]{Y.eps}\thinspace G_2$ (resp. $G_1\thinspace\includegraphics[scale=.3]{H.eps}\thinspace G_2$) over $E_C$ will  decompose $t$ into an edge-Kempe chain $t_1$ of $G_1$ and $t_2$ of $G_2$. 
  Then
$c_1 -_{(p,t_1)} c_2$ in $G_1$ and  $d_1- _{(p,t_2)}  d_2$ in $G_2$.
\end{proof}

\begin{theorem}\label{thm:build} Let $G_1, G_2$ be cubic graphs. If $K'(G_1,3)=a$ and $K'(G_2,3)=b$, then $K'(G_1\thinspace\includegraphics[scale=.3]{Y.eps}\thinspace G_2,3)= K'(G_1\thinspace\includegraphics[scale=.3]{H.eps}\thinspace G_2,3)=ab$.
\end{theorem}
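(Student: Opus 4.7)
The plan is to exhibit a bijection between the edge-Kempe equivalence classes of $G_1\thinspace\includegraphics[scale=.3]{Y.eps}\thinspace G_2$ (respectively $G_1\thinspace\includegraphics[scale=.3]{H.eps}\thinspace G_2$) and the Cartesian product of the equivalence classes of $G_1$ with those of $G_2$. The tools for this are already in place: Theorem \ref{thm:coldec} lets us decompose any 3-edge coloring of the composite, Lemma \ref{lem:star} tells us that compatible equivalences in $G_1$ and $G_2$ combine to an equivalence in the composite, and Lemma \ref{lem:unstar} is precisely the converse.

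Concretely, I will define a map $\Phi$ on equivalence classes of $G_1\thinspace\includegraphics[scale=.3]{Y.eps}\thinspace G_2$ (the $\thinspace\includegraphics[scale=.3]{H.eps}\thinspace$ case is entirely parallel). Given a proper 3-edge coloring $f$ of $G_1\thinspace\includegraphics[scale=.3]{Y.eps}\thinspace G_2$, Theorem \ref{thm:coldec} yields $f = c\thinspace\includegraphics[scale=.3]{Y.eps}\thinspace d$ for some 3-edge colorings $c$ of $G_1$ and $d$ of $G_2$ (essentially by restriction to the two sides of the 3-edge cut). Set
\[
\Phi([f]) = \bigl([c]_{G_1},\ [d]_{G_2}\bigr).
\]

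To see $\Phi$ is well-defined, suppose $f \sim f'$ in $G_1\thinspace\includegraphics[scale=.3]{Y.eps}\thinspace G_2$, written $f=c\thinspace\includegraphics[scale=.3]{Y.eps}\thinspace d$ and $f'=c'\thinspace\includegraphics[scale=.3]{Y.eps}\thinspace d'$. Lemma \ref{lem:unstar} gives $c\sim c'$ in $G_1$ and $d\sim d'$ in $G_2$, so the output pair of classes does not depend on the representative $f$. For injectivity, assume $\Phi([f]) = \Phi([f'])$, so $c\sim c'$ and $d\sim d'$. Then Lemma \ref{lem:star} produces $f = c\thinspace\includegraphics[scale=.3]{Y.eps}\thinspace d \sim c'\thinspace\includegraphics[scale=.3]{Y.eps}\thinspace d' = f'$, giving $[f]=[f']$. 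For surjectivity, take any classes $[c]\in G_1$ and $[d]\in G_2$; Definition \ref{def:xs} constructs a proper coloring $c\thinspace\includegraphics[scale=.3]{Y.eps}\thinspace d$ of $G_1\thinspace\includegraphics[scale=.3]{Y.eps}\thinspace G_2$ by first globally permuting colors of $d$ so that the three colors match across the identified edges; since global color permutations are edge-Kempe equivalences, $\Phi$ of this coloring's class is $([c],[d])$.

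Hence $\Phi$ is a bijection and $K'(G_1\thinspace\includegraphics[scale=.3]{Y.eps}\thinspace G_2,3) = K'(G_1,3)\cdot K'(G_2,3) = ab$. The identical argument, with Lemma \ref{lem:star} and Lemma \ref{lem:unstar} invoked in their $\thinspace\includegraphics[scale=.3]{H.eps}\thinspace$ forms, handles $G_1\thinspace\includegraphics[scale=.3]{H.eps}\thinspace G_2$. The only subtlety I expect, and hence the main obstacle, is verifying that $\Phi$ is genuinely well-defined on the composite coloring rather than merely on an auxiliary choice of restriction: different representatives $f$ in a class may restrict to non-identical $(c,d)$ pairs (they can differ by global color permutations on each side), and one must note that such permutations are edge-Kempe equivalences in $G_1$ and in $G_2$ separately, so the classes $[c]_{G_1}$ and $[d]_{G_2}$ are unaffected. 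This observation, combined with Lemma \ref{lem:unstar}, secures well-definedness.
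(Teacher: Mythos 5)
Your proposal is correct and uses exactly the same three ingredients as the paper's proof (Theorem \ref{thm:coldec} for decomposition, Lemma \ref{lem:star} for combining equivalences, Lemma \ref{lem:unstar} for the converse); the paper phrases the argument via a choice of class representatives $c_i\thinspace\includegraphics[scale=.3]{Y.eps}\thinspace d_j$ rather than an explicit bijection $\Phi$, but the logic is identical. Your closing remark on well-definedness (different representatives restricting to colorings differing by global permutations, which are themselves edge-Kempe equivalences) is the right subtlety to flag and is handled correctly.
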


\begin{proof} 
Choose colorings $c_1, \dots, c_a$,  one from each of the $a$ edge-Kempe-equivalence classes of $G_1$, and likewise choose colorings $d_1, \dots, d_b$, one from each of  the $b$ edge-Kempe-equivalence classes of $G_2$.  Every 3-edge coloring $f$ of $G_1\thinspace\includegraphics[scale=.3]{Y.eps}\thinspace G_2$ can be written as $f=\hat{c} \thinspace\includegraphics[scale=.3]{Y.eps}\thinspace \hat{d}$ by Theorem \ref{thm:coldec}. $\hat{c} \sim c_i$ for some $c_i \in \{c_1, \dots, c_a\},$ and $\hat{d}\sim d_j$  for some $d_j\in \{d_1, \dots, d_b\}$, so  by Lemma \ref{lem:star}
  $f \sim c_i \thinspace\includegraphics[scale=.3]{Y.eps}\thinspace d_j$ for some $c_i \in \{c_1, \dots, c_a\}, d_j\in \{d_1, \dots, d_b\}$. 
   Further by Lemma \ref{lem:unstar}, 
 $c_{i_1} \thinspace\includegraphics[scale=.3]{Y.eps}\thinspace d_{j_1}\sim c_{i_2} \thinspace\includegraphics[scale=.3]{Y.eps}\thinspace d_{j_2}$ only when  $i_1=i_2, j_1 = j_2$. Therefore 
  there are $ab$ edge-Kempe-equivalence classes of $G_1\thinspace\includegraphics[scale=.3]{Y.eps}\thinspace G_2$.  The proof for $G_1\thinspace\includegraphics[scale=.3]{H.eps}\thinspace G_2$ is identical.
\end{proof}

%\begin{corollary}\label{cor:1} Any  3-edge colorable cubic graph $G$ with a nontrivial 3-edge cut $E_C$ has $K'(G,3)=K'(G_1,3)K'(G_2,3)$, where $G_1, G_2$ are the two graphs in the decomposition by $E_C$.
%\end{corollary}

\section{Results on $K'(G,3)$}\label{sec:comp}

%{\color{red} What goes in here is things with similar proofs. All else goes into the next paper.  and say that in the conclusion to this paper.}

Theorem \ref{thm:build} can be extended to compose several graphs, or alternatively to decompose a graph into many  smaller pieces. We will use the theorem below in both contexts to get results about possible numbers  of edge-Kempe equivalence classes for cubic graphs.

\begin{theorem}\label{thm:triv} Let $G$ be a 3-edge colorable cubic graph.  Then $K'(G,3) = \prod_i K'(G_i,3)$ where $\{G_i\}$ is a decomposition of $G$ along nontrivial  2-edge cuts or 3-edge cuts.
\end{theorem}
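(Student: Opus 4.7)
The plan is to prove this by induction on the number of cuts performed to produce the decomposition $\{G_i\}$, which is one less than $|\{G_i\}|$. The base case of zero cuts gives $\{G_i\} = \{G\}$, and the equation reduces to the tautology $K'(G,3) = K'(G,3)$.

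For the inductive step, suppose the decomposition was produced by $k \geq 1$ nontrivial cuts; I would single out the \emph{first} cut, which splits $G$ itself into two cubic graphs $H_1$ and $H_2$ with either $G = H_1\thinspace\includegraphics[scale=.3]{Y.eps}\thinspace H_2$ or $G = H_1\thinspace\includegraphics[scale=.3]{H.eps}\thinspace H_2$. The remaining $k-1$ cuts are each wholly inside one of $H_1$ or $H_2$ (since every later cut is applied to a single piece of the current partition), so they partition into subsequences that decompose $H_1$ into $\{G_i\}_{i \in I_1}$ and $H_2$ into $\{G_i\}_{i \in I_2}$, where $I_1 \sqcup I_2$ indexes the full collection $\{G_i\}$. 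By the corollary following Theorem \ref{thm:coldec}, $H_1$ and $H_2$ are each 3-edge colorable, so the induction hypothesis applies to both and yields $K'(H_j,3) = \prod_{i \in I_j} K'(G_i,3)$ for $j = 1,2$. Applying Theorem \ref{thm:build} to $G = H_1\thinspace\includegraphics[scale=.3]{Y.eps}\thinspace H_2$ or $G = H_1\thinspace\includegraphics[scale=.3]{H.eps}\thinspace H_2$ gives $K'(G,3) = K'(H_1,3)\cdot K'(H_2,3)$, and multiplying the two factored expressions produces $K'(G,3) = \prod_i K'(G_i,3)$, as desired.

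The main step is really just the bookkeeping: once we have Theorem \ref{thm:build} as the two-piece case, the iterated formula is essentially automatic. The only points requiring a word of care are that the later cuts remain nontrivial cuts of the subgraph they act on (immediate from the fact that ``nontrivial'' depends only on the two sides having fewer vertices than the piece being split) and that 3-edge colorability is preserved through each split, which is exactly what the corollary to Theorem \ref{thm:coldec} provides. There is no serious obstacle; this is a straightforward induction packaging Theorem \ref{thm:build}.
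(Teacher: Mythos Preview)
Your proof is correct and is exactly the paper's approach: the paper's proof is the single sentence ``This follows from multiple applications of Theorem~\ref{thm:build},'' and your induction simply spells out that iteration explicitly.
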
 

\begin{proof}  This follows from multiple applications of Theorem \ref{thm:build}.
%It follows from Theorem \ref{thm:build} that any  3-edge colorable cubic graph $G$ with a nontrivial 2-edge cut or  3-edge cut $E_C$ has $K'(G,3)=K'(G_1,3)K'(G_2,3)$, where $G_1, G_2$ are the two graphs in the decomposition by $E_C$. 
 
%We then apply induction on the number of vertices of $G$.  Note that if $G$ has no nontrivial  2 or 3-edge cut, then $\{G_i\} = G$ and the result is tautologically true.

% We proceed by induction on the number of vertices of $G$.  Suppose $G$ has a nontrivial 3-edge cut.  Then $G = G_1\thinspace\includegraphics[scale=.3]{Y.eps}\thinspace G_2$ where $G_i$ has fewer vertices than $G$. Then by Corollary \ref{cor:1}, $K'(G,3) = K'(G_1,3)K'(G_2,3)$. The inductive hypothesis applies to $G_i$ and our proof is complete.

\end{proof}

\subsection{Planar, cubic, bipartite graphs}

The following theorem answers a question from \cite[Section 3]{mohar}.

\begin{theorem} Let $H$ be a 2-connected, but not 3-connected, planar bipartite cubic graph.  Then $K'(H,3)=1$.
\end{theorem}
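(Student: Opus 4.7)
The plan is to decompose $H$ via the $\thinspace\includegraphics[scale=.3]{H.eps}\thinspace$ operation into pieces for which $K'(\cdot,3)$ is already understood, then reassemble using Theorem \ref{thm:triv}.

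First, I would apply Theorem \ref{thm:2-not-3decomp} to decompose $H$ into a collection $\{H_i\}$ of cubic loopless graphs. By the remark following that theorem, each $H_i$ is either a theta graph (two vertices joined by three parallel edges) or a simple 3-connected cubic graph. The corollary following Theorem \ref{thm:2-not-3decomp} ensures that each piece inherits both planarity and bipartiteness from $H$.

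Next, I would verify that $K'(H_i,3)=1$ for each piece. If $H_i$ is a theta graph, then its three parallel edges share both endpoints and so must receive three distinct colors in any proper 3-edge coloring (equivalently, invoke Lemma \ref{lem:3col}(b) on the edge cut consisting of all three edges). Hence all proper 3-edge colorings of a theta graph differ only by a global permutation of colors and are therefore edge-Kempe equivalent, giving $K'(\Theta,3)=1$. Otherwise $H_i$ is a simple 3-connected planar bipartite cubic graph, and Fisk's theorem, as cited in the introduction via Mohar, yields $K'(H_i,3)=1$ directly.

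The conclusion is then immediate from Theorem \ref{thm:triv}:
\[
K'(H,3)=\prod_i K'(H_i,3)=1.
\]
The main obstacle is really just bookkeeping rather than a genuine mathematical difficulty, since the heavy lifting has already been done in Sections \ref{sec:decomp} and \ref{sec:mmcol}. The only delicate observation is that although the theta-graph pieces produced by the decomposition are not 3-connected in the standard sense (so Fisk's theorem does not apply to them directly), they nonetheless satisfy $K'(\Theta,3)=1$ by the elementary argument above, so the product formula still collapses to $1$.
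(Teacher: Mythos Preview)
Your proof is correct and follows essentially the same route as the paper's: decompose via Theorem~\ref{thm:2-not-3decomp}, note that planarity and bipartiteness descend to the pieces, invoke Fisk's result on each piece, and multiply using Theorem~\ref{thm:triv}. Your explicit treatment of the theta-graph pieces is a small but welcome refinement, since the paper's proof simply declares all $H_i$ to be 3-connected and applies Fisk's theorem uniformly, glossing over whether that theorem covers the two-vertex theta graph.
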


\begin{proof} By Theorem \ref{thm:2-not-3decomp}, $H$ may be decomposed into $\{H_i\}$ where all $H_i$ are 3-connected. %or are  the multi-graph consisting of two vertices and three parallel edges.  
By Lemmas \ref{lem:plan} and \ref{lem:bip}, all $H_i$ are planar and bipartite. As pointed out in \cite{mohar}, it follows from \cite{fisk} that all 3-connected planar bipartite cubic graphs $G$ have $K'(G,3)=1$  so for all $H_i ,K'(H_i,3)=1$.  It then follows from Theorem \ref{thm:triv} that $K'(H,3)=1$.
\end{proof}

Recall that if $G$ is cubic and bipartite then it must be  bridgeless. Thus we get the following result.

\begin{corollary}  Let $H$ be a planar bipartite cubic graph.  Then $K'(H,3)=1$.
\end{corollary}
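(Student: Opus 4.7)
The plan is to reduce this corollary to the immediately preceding theorem (for the 2-connected, not 3-connected case) together with the Fisk--Mohar result for the 3-connected case. The only new work is verifying that a planar bipartite cubic graph $H$ is necessarily 2-connected (on each component), so that one of those two results applies.

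First I would establish that $H$ is bridgeless. This is the standard parity argument hinted at in the sentence preceding the corollary: if $e$ were a bridge, deleting $e$ would produce a component with bipartition $(A,B)$ in which one vertex has degree 2 and all other vertices have degree 3; counting edges between $A$ and $B$ as $3|A|-1=3|B|$ (or $3|A|=3|B|-1$) gives a contradiction modulo 3. Hence $H$ has no bridges.

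Next I would argue that a connected bridgeless cubic graph is 2-connected. If $v$ were a cut vertex, the three edges incident to $v$ would distribute among at least two components of $H-v$; the component receiving only a single edge from $v$ would be joined to the rest of $H$ by exactly that edge, which would then be a bridge, contradicting the previous step. So if $H$ is connected, it is 2-connected. (If $H$ is disconnected, apply the argument componentwise; $K'$ of a disjoint union is the product of the $K'$ values of the components, and each component is a planar bipartite cubic graph, so it suffices to treat a single component.)

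Finally I would split into two cases on the connectivity of $H$. If $H$ is 3-connected, then by the result of Fisk cited (via Mohar) in the introduction, $K'(H,3)=1$. If $H$ is 2-connected but not 3-connected, the immediately preceding theorem of this section gives $K'(H,3)=1$. Either way the conclusion holds. The only ``obstacle'' is really just the case-split bookkeeping; there is no new technical machinery to develop, since Theorem~\ref{thm:triv} and Theorem~\ref{thm:2-not-3decomp} already supply the decomposition-and-multiply framework, and the preceding theorem already did the work of combining them with Fisk's planar 3-connected result.
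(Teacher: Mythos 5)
Your proposal is correct and follows exactly the route the paper intends: the paper's entire justification is the remark that a cubic bipartite graph is bridgeless (hence each component is 2-connected), after which the 3-connected case is handled by Fisk's result and the remaining case by the immediately preceding theorem. You have simply filled in the parity argument and the bridgeless-implies-2-connected step that the paper leaves implicit.
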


\subsection{Nonplanar, cubic, bipartite graphs}

Matters are quite different for \emph{nonplanar} bipartite cubic graphs.  It is well known that $K_{3,3}$ has two  different edge-colorings (shown in Figure \ref{fig:k33col}). 
\begin{figure}[htp]
\begin{center}
\includegraphics[scale=.7]{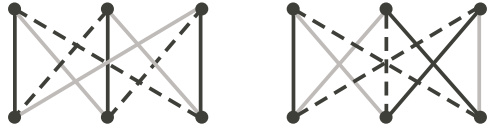}
\caption{The two colorings of $K_{3,3}$.}
\label{fig:k33col}
\end{center}
\end{figure}
In each of these colorings, each color-pair forms a Hamilton cycle.  Therefore, any edge-Kempe switch results in a permutation of the colors and neither coloring of Figure  \ref{fig:k33col} can be obtained from the other.
 %Kempe switches affect the structure of the Kempe chains in either coloring, and 
 Thus, there are two edge-Kempe equivalence classes, i.e. $K'(K_{3,3},3)=2$.

\begin{lemma}\label{lem:snbc}
Every simple bipartite nonplanar cubic graph $B$ with $n\leq 10$  has $K'(B,3)>1$.
\end{lemma}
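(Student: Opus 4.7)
My plan is enumeration. Since $B$ is cubic and bipartite, its two parts have equal size, so $n\in\{6,8,10\}$, and every simple cubic bipartite graph on $n$ vertices is $K_{n/2,n/2}$ minus a $2$-factor. For $n=6$ this forces $B=K_{3,3}$, with $K'(K_{3,3},3)=2$ already established just before the lemma. For $n=8$, the only simple cubic bipartite graph is $K_{4,4}$ minus a perfect matching, which is the planar graph $Q_3$, so no nonplanar case arises. For $n=10$ the deleted $2$-factor is, up to isomorphism in $\mathrm{Aut}(K_{5,5})$, either a single Hamilton cycle $C_{10}$ or a $C_4\cup C_6$, since these are the only ways to write $10$ as a sum of even integers at least $4$ (simple means no $2$-cycles).

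In the $C_4\cup C_6$ subcase, place the $C_4$ on $\{u_1,u_2,v_1,v_2\}$ and the $C_6$ on $\{u_3,u_4,u_5,v_3,v_4,v_5\}$; the three remaining edges between $\{v_3,v_4,v_5\}$ and $\{u_3,u_4,u_5\}$ form a nontrivial 3-edge cut, and inspection shows that the two halves of the resulting $\thinspace\includegraphics[scale=.3]{Y.eps}\thinspace$-decomposition are each $K_{3,3}$. Theorem~\ref{thm:build} then gives $K'(B,3)=K'(K_{3,3},3)^2=4>1$.

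In the $C_{10}$ subcase, label so that the neighbors of $u_i$ are $v_{i+1},v_{i+2},v_{i+3}\pmod 5$, and let $c$ be the ``rotational'' coloring that assigns color $k$ to $u_iv_{i+k}$ for $k=1,2,3$. A short trace along alternating colors shows each of the three color-pair subgraphs of $c$ is a single Hamilton $10$-cycle, so every edge-Kempe switch on $c$ is a global swap of two colors; hence the Kempe class of $c$ consists only of the $6$ color-permutations of $c$, each of whose matchings uses a single value of the shift $j-i$. Any 3-coloring whose matchings use mixed shifts (for example $\{u_1v_2,u_2v_4,u_3v_5,u_4v_1,u_5v_3\}$, $\{u_1v_3,u_2v_5,u_3v_4,u_4v_2,u_5v_1\}$, $\{u_1v_4,u_2v_3,u_3v_1,u_4v_5,u_5v_2\}$, which one checks is a valid 3-edge-coloring) cannot be in that Kempe class, so $K'(B,3)\geq 2$.

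The main obstacle is this $C_{10}$ subcase, because the graph has no nontrivial $2$- or $3$-edge cut: a cut with $|S|\leq 5$ would force respectively a double edge (ruled out by simplicity), a triangle (ruled out by bipartiteness), or a $K_{2,3}$-subgraph (ruled out since distinct $u_i,u_j$ share at most two neighbors here), so Theorem~\ref{thm:triv} cannot be invoked. The Hamilton-cycle structure of the rotational coloring is what lets us pin down one entire Kempe class concretely, after which exhibiting a witness coloring in a different class is routine.
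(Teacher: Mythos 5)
Your proof is correct and complete, but it goes by a genuinely different route than ours. The paper's argument regards each such $B$ as arising from $K_{3,3}$ by subdividing two of its edges (twice each, to preserve bipartiteness and simplicity) and shows, via the local colorings of Figure~\ref{fig:gad1ind3col}, that the two inequivalent tri-Hamiltonian colorings of $K_{3,3}$ extend to colorings of $B$ that remain inequivalent, since each color pair gains at most an isolated edge-Kempe cycle. You instead enumerate the graphs outright: the bipartite complement of $B$ in $K_{n/2,n/2}$ is $(n/2-3)$-regular, so the only candidates are $K_{3,3}$, the cube (planar, hence vacuous), and the two $10$-vertex graphs $K_{5,5}-C_{10}$ and $K_{5,5}-(C_4\cup C_6)$. (A small imprecision: for $n=6,8$ the complement is a $0$- or $1$-factor rather than a $2$-factor, but your case split treats these correctly.) Recognizing $K_{5,5}-(C_4\cup C_6)$ as a composition of two copies of $K_{3,3}$ over its $3$-edge cut lets you invoke Theorem~\ref{thm:build} and obtain the sharper count $K'=4$; for $K_{5,5}-C_{10}$, which has no nontrivial edge cut, your rotational coloring is indeed tri-Hamiltonian (I traced all three color pairs), so its Kempe class is exactly its six global color permutations, and your mixed-shift witness is a valid proper $3$-edge-coloring lying outside that class. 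What your approach buys is self-containedness and exact class counts in two of the three nonplanar cases; what ours buys is a reusable gadget (the $4$-vertex subdivision and its coloring extensions), at the price of a more delicate ``colorings of $B$ restrict to colorings of $K_{3,3}$'' step that you avoid entirely.
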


\begin{proof}
Every simple bipartite nonplanar cubic graph is a subdivision of $K_{3,3}$. 
 To maintain the bipartition  and avoid multiple edges,  $K_{3,3}$ must be subdivided with at least 4 vertices, two on each of two edges.  These edges may be independent or may be incident.
\begin{figure}[htp]
\begin{center}
\includegraphics[scale=.6]{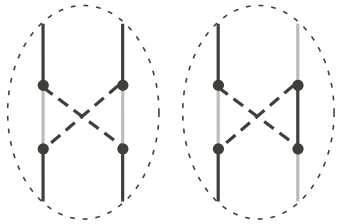}
\caption{The two possible colorings around subdivided  independent or incident edges.}
\label{fig:gad1ind3col}
\end{center}
\end{figure}

Any  coloring of the original graph extends to either one or two new (edge-Kempe equivalent) colorings,  as is shown in Figure \ref{fig:gad1ind3col}.   
If a coloring had three Hamilton cycles before subdivision (as is true for both colorings of $K_{3,3}$), at most it gains an isolated edge-Kempe  cycle after subdivision of this sort.  Thus when subdividing $K_{3,3}$ with a single 4-vertex subdivision, there still exist two colorings that are not edge-Kempe-equivalent. 
\end{proof}

Further examples of nonplanar cubic bipartite graphs with $K'(G,3)>1$ will be given in Section \ref{sec:big}. 
In contrast, Figure \ref{fig:ugh12} shows a bipartite nonplanar cubic graph $U$ with 12 vertices and $K'(U,3)=1$. 
\begin{figure}[htp]
\begin{center}
\includegraphics[scale=.6]{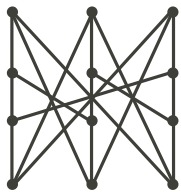}
\caption{A nonplanar bipartite  cubic graph that has a single edge-Kempe equivalence class.}
\label{fig:ugh12}
\end{center}
\end{figure}
$K'(U,3)$ was computed manually and verified using custom \emph{Mathematica} code. We can use $U$ to produce an interesting infinite class of graphs.

%A more interesting infinite class of graphs comes from composing $U$ with itself. 

\begin{theorem}\label{thm:one}
There exists an infinite family of simple nonplanar  3-connected bipartite cubic graphs $U_k$ with $2+10k$ vertices and $K'(U_k,3)=1$.
\end{theorem}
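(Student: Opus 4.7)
The plan is to define $U_k$ inductively via iterated $\thinspace\includegraphics[scale=.3]{Y.eps}\thinspace$-composition: set $U_1 := U$ and, for $k \geq 2$, define $U_k := U_{k-1} \thinspace\includegraphics[scale=.3]{Y.eps}\thinspace U$, where the distinguished vertex $v_2 \in U$ and the edge identification with $v_1 \in U_{k-1}$ are chosen (as detailed below) to preserve simplicity and 3-connectedness. Since each such composition deletes $v_1$ and $v_2$ while contributing the remaining $|U|-1$ vertices of the new copy, the net gain is $10$ per step, so induction yields $|U_k| = 12 + 10(k-1) = 10k + 2$.

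Most of the desired properties fall out of results already in hand. The graph is cubic by definition; $U_k$ is bipartite by Lemma \ref{lem:bip} applied inductively, placing $v_2$ in the part opposite $v_1$; it is nonplanar by Lemma \ref{lem:plan}, since $U$ is nonplanar; and it is simple, because the three new cross-edges $s_{1j}s_{2j}$ run between vertex sets drawn from disjoint copies and so cannot coincide with pre-existing edges, while the $s_{1j}$ (resp.\ $s_{2j}$) are distinct as the neighbors of $v_1$ (resp.\ $v_2$) in a simple cubic graph. The heart of the statement, $K'(U_k,3) = 1$, follows directly from Theorem \ref{thm:build} by induction: $K'(U_k, 3) = K'(U_{k-1}, 3) \cdot K'(U, 3) = 1 \cdot 1 = 1$.

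The one step requiring real work is 3-connectedness. Each $\thinspace\includegraphics[scale=.3]{Y.eps}\thinspace$-composition introduces a nontrivial 3-edge cut, and in a cubic graph such a cut corresponds to a 3-vertex cut; however, this does not violate 3-vertex-connectedness, which only forbids cuts of size less than $3$. To rule out 2-vertex cuts in $U_k$, the key observation is that any pair of removed vertices can destroy at most two of the three $\thinspace\includegraphics[scale=.3]{Y.eps}\thinspace$-cross-edges, so at least one cross-edge always survives and keeps the two sides joined. Hence any 2-vertex cut would have to localize to a 2-vertex cut inside $U_{k-1}$ or $U$ (possibly with one vertex being a cross-edge endpoint), contradicting 3-connectedness of both by induction. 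Making this precise requires choosing $v_2 \in U$ so that the three neighbors $s_{21}, s_{22}, s_{23}$ are well-distributed in $U - v_2$, specifically so that deletion of any two further vertices does not isolate a component of $U - v_2$ containing none of the surviving $s_{2j}$; since $U$ has only 12 vertices and is fully explicit, such a $v_2$ can be exhibited by direct inspection, and the same prescription works at every step of the induction. This case analysis is where I expect the main obstacle.
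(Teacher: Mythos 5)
Your construction is exactly the paper's: $U_k$ is the $k$-fold $\thinspace\includegraphics[scale=.3]{Y.eps}\thinspace$-composition of $U$ with itself, with $K'(U_k,3)=1$ coming from Theorem \ref{thm:build} and the remaining properties from Lemmas \ref{lem:plan} and \ref{lem:bip} (the paper simply asserts the structural facts and points to a figure). The one step you flag as an obstacle is actually automatic and needs no special choice of $v_2$: for any $S$ with $|S|\le 2$, every component of $G_i-v_i-S$ still contains an undeleted neighbor of $v_i$ (since $G_i-S$ is connected), hence an endpoint of a surviving cross-edge, so the $\thinspace\includegraphics[scale=.3]{Y.eps}\thinspace$-composition of two 3-connected cubic graphs is always 3-connected.
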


\begin{proof}
Let $U_k = U \thinspace\includegraphics[scale=.3]{Y.eps}\thinspace\cdots$ ($k$ copies) $\dots \thinspace\includegraphics[scale=.3]{Y.eps}\thinspace U$.  By Theorem \ref{thm:build}, $K'(U_k,3)=1$.  Graphs $U_2, U_3,$ and $U_4$ are shown in Figure \ref{fig:K33gadg}.
\end{proof}
\begin{figure}[htp]
\begin{center}
\includegraphics[scale=.5]{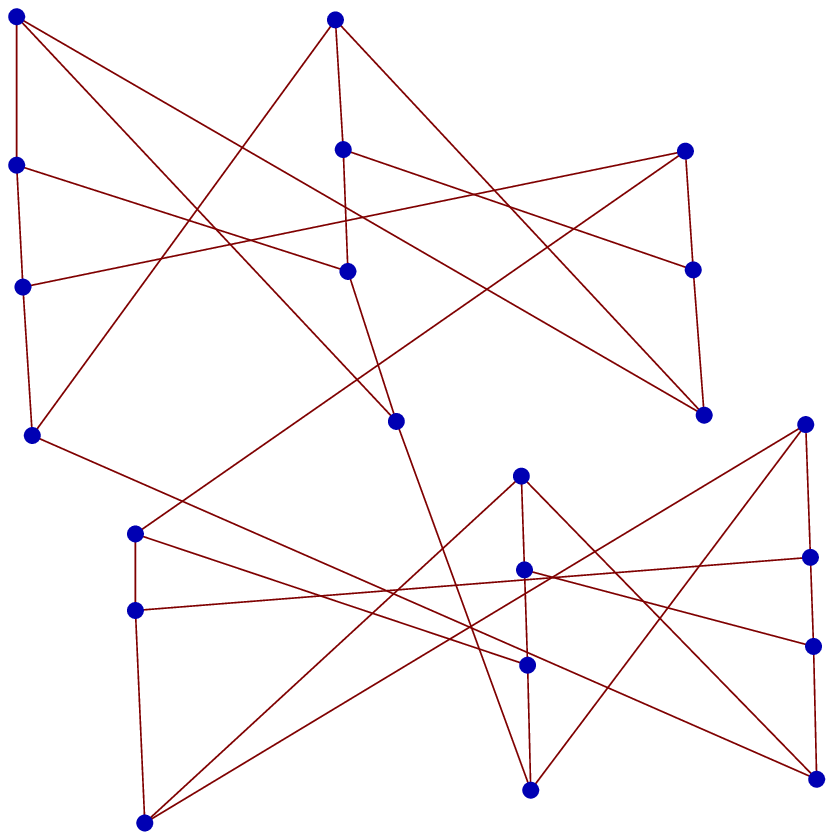} \includegraphics[scale=.5]{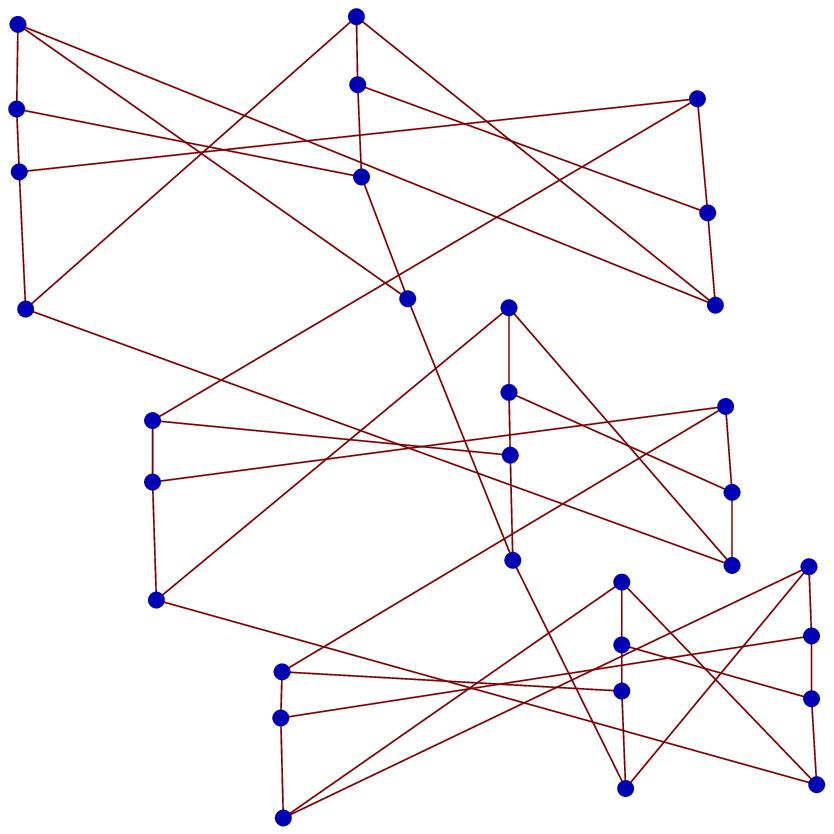} 
\includegraphics[scale=.5]{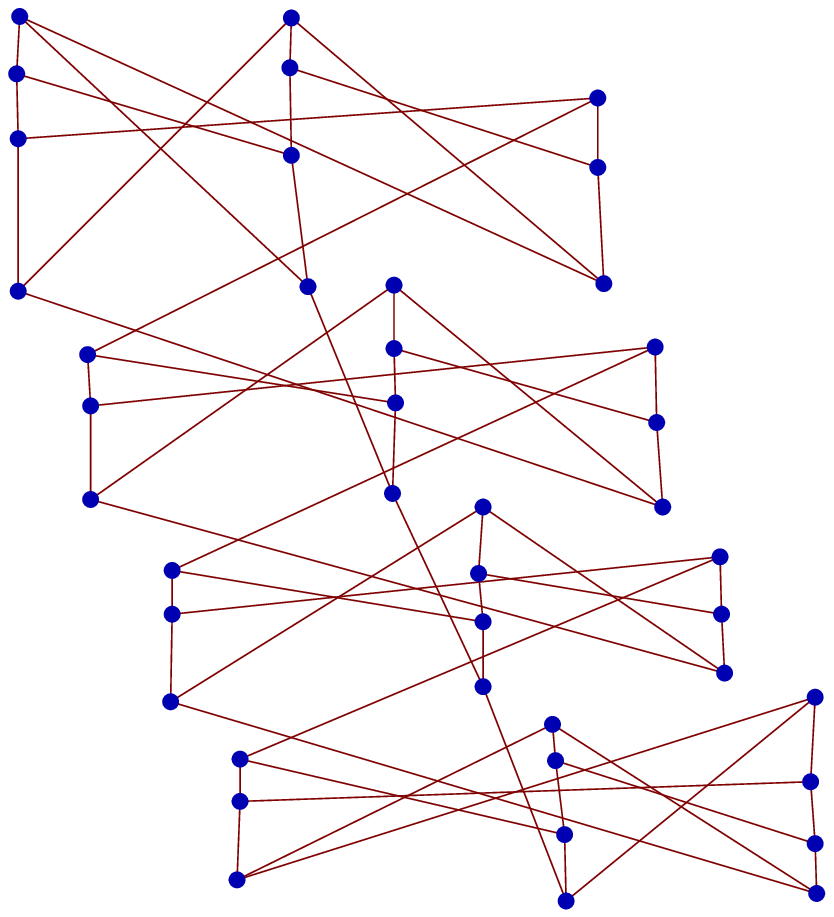}
\caption{Three members of an infinite family of bipartite nonplanar cubic graphs $U_k$, each member of which has a single edge-Kempe equivalence class.}
\label{fig:K33gadg}
\end{center}
\end{figure}

By  $\thinspace\includegraphics[scale=.3]{Y.eps}\thinspace$ composition of $U$ with a planar cubic bipartite graph with $n-10$ vertices we get the following more general result.

\begin{theorem}\label{thm:one-simple}
 For any $n\geq 18$ there is  a simple, nonplanar, bipartite, 3-connected,  cubic graph $G$   with $n$ vertices and $K'(G,3)=1$.
\end{theorem}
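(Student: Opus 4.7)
The plan is to construct $G := U \thinspace\includegraphics[scale=.3]{Y.eps}\thinspace P$, where $U$ is the 12-vertex graph of Figure \ref{fig:ugh12} (satisfying $K'(U,3)=1$) and $P$ is a planar, bipartite, 3-connected cubic graph on $n-10$ vertices, with the distinguished vertices and edge identification chosen so that $G$ is simple. Given such a $P$, the claimed properties of $G$ follow at once: $|G|=|U|+|P|-2=n$; $G$ is cubic by construction; $G$ is bipartite by Lemma \ref{lem:bip}; $G$ is nonplanar by Lemma \ref{lem:plan} (since $U$ is); and $K'(G,3)=K'(U,3)\cdot K'(P,3)=1\cdot 1=1$ by Theorem \ref{thm:build} together with the preceding corollary (which supplies $K'(P,3)=1$).

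For the 3-connectedness of $G$, I would use the standard fact that for cubic graphs vertex 3-connectedness coincides with 3-edge-connectedness; a short cut-counting argument then shows that a 2-edge cut of $G$ would, after ``absorbing'' the composition's 3-edge cut $E_C$ onto the appropriate side by re-attaching $v_1$ or $v_2$, induce a 2-edge cut in one of the factors $U$ or $P$, contradicting their 3-edge-connectedness.

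The substantive step is the existence of $P$ for every even $n-10\geq 8$. I would take as building blocks the even prisms $C_{2k}\Box K_2$ ($k\geq 2$)---planar, bipartite, 3-connected cubic graphs of orders $8,12,16,20,\ldots$---together with iterated $\thinspace\includegraphics[scale=.3]{Y.eps}\thinspace$-compositions of these, which by Lemmas \ref{lem:plan}, \ref{lem:bip} and the 3-connectedness argument above remain planar, bipartite, 3-connected, and cubic. A brief arithmetic check then confirms that every required even order is realized, with any small exceptional orders (notably $|P|=10$, for which a short Euler-formula argument shows no simple planar bipartite 3-connected cubic graph exists) handled by a direct ad hoc construction of a 20-vertex example.

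The main obstacle is this existence question for $P$; once it is in hand for every $n\geq 18$, the theorem follows immediately from Theorem \ref{thm:build}.
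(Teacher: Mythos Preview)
Your approach is exactly the paper's: it proves the theorem with the single sentence ``By $\thinspace\includegraphics[scale=.3]{Y.eps}\thinspace$ composition of $U$ with a planar cubic bipartite graph with $n-10$ vertices we get the following more general result,'' and provides no further detail. Your write-up in fact supplies considerably more justification than the paper does (the vertex count, the appeals to Lemmas~\ref{lem:plan} and~\ref{lem:bip}, the 3-connectedness argument via 3-edge-connectedness of cubic graphs, and the explicit reference to Theorem~\ref{thm:build}).

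You are also right to flag $|P|=10$ as exceptional: there is indeed no simple 3-connected planar bipartite cubic graph on $10$ vertices (dually, one would need an Eulerian triangulation on $7$ vertices, forcing degree sequence $(6,4,4,4,4,4,4)$, and a short case check shows the outer hexagon cannot be triangulated by a non-crossing perfect matching of chords). The paper does not address this case at all, so your observation actually uncovers a gap in the paper's own argument at $n=20$. Your proposed remedy---an ``ad hoc construction of a 20-vertex example''---is not yet a proof, since you do not exhibit such a graph; if you want a complete argument you will need either to produce an explicit simple nonplanar bipartite 3-connected cubic graph on $20$ vertices with $K'=1$, or to find an alternative factor to replace $P$ in that one case. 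For every other even $n\ge 18$ your prism-and-composition scheme (cubes and even prisms under $\thinspace\includegraphics[scale=.3]{Y.eps}\thinspace$) does supply the required $P$, so the remaining work is localized entirely at $n=20$.
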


%\begin{corollary}\label{cor:3}
%For each $m\geq 1$, there exists an infinite family of simple nonplanar bipartite cubic graphs $Y_k$ with $2+4m+10k$ vertices and $K'(Y_k,3)=2^m$.
%\end{corollary}

%\begin{proof}
%Note that $K_{3,3}\thinspace\includegraphics[scale=.3]{Y.eps}\thinspace\cdots$ ($m$ copies) $\dots\thinspace\includegraphics[scale=.3]{Y.eps}\thinspace K_{3,3}\thinspace\includegraphics[scale=.3]{Y.eps}\thinspace U_k$ has the required number of vertices and edge-Kempe equivalence classes.
%\end{proof}

Notice that similar results 
%like those in Theorems \ref{thm:first} and \ref{thm:one}  
can be  obtained for graphs that are only 2-connected as well by using the $\thinspace\includegraphics[scale=.3]{H.eps}\thinspace$ composition.

\subsection{Cubic graphs with $K'(G,3)>1$}\label{sec:big}

We can form $K_{3,3}\thinspace\includegraphics[scale=.3]{Y.eps}\thinspace G$  with any  3-connected cubic graph $G$ to obtain a 3-connected  nonplanar cubic graph.   By Theorem \ref{thm:build}, $$K'(K_{3,3} \thinspace\includegraphics[scale=.3]{Y.eps}\thinspace G, 3) =  K'(K_{3,3},3)K'( G,3) =  2K'( G,3).$$  %If $G$ is bipartite, then the \thinspace\includegraphics[scale=.3]{Y.eps}\thinspace composition produces a bipartite graph as well.

  \begin{theorem}\label{thm:2comp} For every even $n\geq 8$, there exists a 3-connected  nonplanar  cubic graph $G$ with $n$ vertices and  exactly $2$ edge-Kempe equivalence classes. 
  \end{theorem}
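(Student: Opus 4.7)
The plan is to build the required graphs as $\thinspace\includegraphics[scale=.3]{Y.eps}\thinspace$ compositions of $K_{3,3}$ with a chain of copies of $K_4$. For each even $n\geq 8$, set $j=(n-6)/2\geq 1$ and define
$$G_n \;=\; K_{3,3}\thinspace\includegraphics[scale=.3]{Y.eps}\thinspace\underbrace{K_4\thinspace\includegraphics[scale=.3]{Y.eps}\thinspace K_4\thinspace\includegraphics[scale=.3]{Y.eps}\thinspace\cdots\thinspace\includegraphics[scale=.3]{Y.eps}\thinspace K_4}_{j\text{ copies of }K_4},$$
making the identifications in each $\thinspace\includegraphics[scale=.3]{Y.eps}\thinspace$ step so that no multiple edges are created. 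Since each $\thinspace\includegraphics[scale=.3]{Y.eps}\thinspace$ composition of cubic graphs on $a$ and $b$ vertices produces a cubic graph on $a+b-2$ vertices, a direct count gives $|V(G_n)| = 6 + 2j = n$.

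The edge-Kempe count is immediate from Theorem \ref{thm:triv}: the construction exhibits a decomposition of $G_n$ along $j+1$ nontrivial 3-edge cuts, and using the facts $K'(K_{3,3},3)=2$ (from the discussion preceding Lemma \ref{lem:snbc}) and $K'(K_4,3)=1$ (all six proper 3-edge colorings of $K_4$ differ by global color permutations),
$$K'(G_n,3)\;=\;K'(K_{3,3},3)\cdot K'(K_4,3)^{j}\;=\;2\cdot 1^{j}\;=\;2.$$
Nonplanarity is immediate from Lemma \ref{lem:plan}, since $G_n = K_{3,3}\thinspace\includegraphics[scale=.3]{Y.eps}\thinspace H_j$ for the cubic graph $H_j$ built from the $K_4$'s, and $K_{3,3}$ is nonplanar.

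The main obstacle is verifying that $G_n$ is 3-connected, since the $\thinspace\includegraphics[scale=.3]{Y.eps}\thinspace$ decomposition does not obviously preserve vertex connectivity. The plan is to prove as an auxiliary step that if $A$ and $B$ are simple 3-connected cubic graphs, then $A\thinspace\includegraphics[scale=.3]{Y.eps}\thinspace B$ is 3-connected. Assuming a hypothetical 2-vertex cut $\{u,v\}$, I would case-split on its location with respect to the 3-edge cut $E_C$ of the composition. If both $u,v$ lie on the same side, say in $A'$, then 3-connectedness of $A$ implies $A-\{u,v\}$ is connected; because the contracted vertex $v_A\in A$ has its only neighbors among $\{s_{11},s_{12},s_{13}\}$, every connected component of $A'-\{u,v\}$ must contain one of these $s_{1i}$, and hence reaches $B'$ through a surviving cut edge. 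Meanwhile $B'=B-v_B$ is connected since $B$ is 3-connected. If $u\in A'$ and $v\in B'$, then each of $u,v$ is incident to at most one cut edge (the $s_{1i}$ and $s_{2i}$ being distinct), so at least one cut edge survives; moreover $A'-\{u\}$ and $B'-\{v\}$ are each connected by 2-connectivity of $A$ and $B$. Either way $G_n-\{u,v\}$ is connected, contradicting the choice of $\{u,v\}$. Iterating this lemma along the defining composition of $G_n$ (with each intermediate composition remaining simple and 3-connected by induction) completes the verification and proves the theorem.
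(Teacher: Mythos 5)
Your construction is correct, and at the top level it is the same idea as the paper's: form $K_{3,3}\thinspace\includegraphics[scale=.3]{Y.eps}\thinspace G$ with a 3-connected planar cubic graph $G$ on $n-4$ vertices and invoke Theorem \ref{thm:build} to get $K'=2\cdot K'(G,3)$. The differences are in how the two ingredients are certified. The paper takes $G$ to be an \emph{arbitrary} 3-connected planar cubic graph on $n-4$ vertices and asserts the product is $2$; this implicitly uses $K'(G,3)=1$, which the paper has only established for planar \emph{bipartite} cubic graphs (via Fisk) and for a few explicit families. You sidestep this by building the planar factor as an iterated $\thinspace\includegraphics[scale=.3]{Y.eps}\thinspace$-chain of copies of $K_4$, where $K'(K_4,3)=1$ is checked directly ($K_4$ has exactly three perfect matchings, so its six proper 3-edge-colorings differ only by global color permutations), and then Theorem \ref{thm:triv} gives the product formula $2\cdot 1^j$; your vertex count $6+2j=n$ covers all even $n\geq 8$. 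You also supply the one step the paper leaves entirely implicit, namely that $\thinspace\includegraphics[scale=.3]{Y.eps}\thinspace$ of two simple 3-connected cubic graphs is 3-connected; your case analysis on the location of a putative 2-cut relative to the 3-edge cut is sound (the only nit is that in the split case $A'-\{u\}=A-\{v_A,u\}$ is connected by \emph{3}-connectivity of $A$, not 2-connectivity, but you have that hypothesis available). So your argument buys a more self-contained and slightly more careful proof at the cost of restricting to one explicit family, whereas the paper's one-line proof is more flexible about the planar factor but leans on unstated facts.
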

  \begin{proof} Form $K_{3,3}\thinspace\includegraphics[scale=.3]{Y.eps}\thinspace G$  with any  3-connected planar cubic graph $G$ on $n-4$ vertices to obtain a 3-connected  nonplanar cubic graph with $n$ vertices and $K'(K_{3,3} \thinspace\includegraphics[scale=.3]{Y.eps}\thinspace G, 3) =2$. %(Note that the smallest 3-connected planar cubic bipartite graph has 8 vertices.)
  \end{proof}
  
    \begin{corollary}\label{cor:2comp} For every even $n\geq 12$, there exists a 3-connected  nonplanar bipartite cubic graph $G$ with $n$ vertices and  exactly $2$ edge-Kempe equivalence classes. 
  \end{corollary}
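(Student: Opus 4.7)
The plan is to adapt the construction in the proof of Theorem \ref{thm:2comp} by insisting that the auxiliary graph in the composition also be bipartite. For even $n \geq 12$, I would take $H$ to be a 3-connected planar bipartite cubic graph on $n - 4$ vertices, and set $G := K_{3,3} \thinspace\includegraphics[scale=.3]{Y.eps}\thinspace H$. Then $|V(G)| = 6 + (n - 4) - 2 = n$, $G$ is cubic by construction, nonplanar because $K_{3,3}$ persists on one side of the separating 3-edge cut, bipartite by Lemma \ref{lem:bip}, and 3-connected because the $\thinspace\includegraphics[scale=.3]{Y.eps}\thinspace$ composition of two 3-connected cubic graphs has no 2-edge cut and hence (being cubic) no 2-vertex cut. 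Invoking Theorem \ref{thm:build} together with the earlier corollary that every planar bipartite cubic graph $H$ satisfies $K'(H, 3) = 1$, I conclude
\[
K'(G, 3) = K'(K_{3,3}, 3) \cdot K'(H, 3) = 2 \cdot 1 = 2.
\]

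The main obstacle is exhibiting a 3-connected planar bipartite cubic graph $H$ on $n - 4$ vertices for every even $n \geq 12$, that is, on every even number of vertices $m \geq 8$. I would handle this in two cases depending on $m \bmod 4$. When $m \equiv 0 \pmod 4$, the prism $C_{m/2} \times K_2$ is 3-connected, planar, cubic, and bipartite (the last because $m/2$ is even). When $m \equiv 2 \pmod 4$ and $m \geq 10$, I would use the ``crossed ladder'' obtained from the ladder graph $P_{m/2} \times K_2$ by adding two diagonal edges that join its four degree-2 endpoints in a crossing pattern; routing these added edges on opposite sides of the ladder strip keeps the drawing planar, bipartiteness is preserved because $m/2$ is odd so the diagonals connect vertices in opposite parts, and 3-connectivity can be verified by a brief direct case check.

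Once these auxiliary graphs are in hand, the proof is essentially a one-line consequence of Theorem \ref{thm:build} combined with Lemma \ref{lem:bip}; the substantive content of the corollary lies entirely in the case-by-case construction of $H$ and in checking that the $\thinspace\includegraphics[scale=.3]{Y.eps}\thinspace$ composition with $K_{3,3}$ preserves all four desired properties (bipartite, nonplanar, cubic, 3-connected) simultaneously.
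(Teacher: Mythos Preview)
Your overall strategy---take $G = K_{3,3}\thinspace\includegraphics[scale=.3]{Y.eps}\thinspace H$ with $H$ a 3-connected planar bipartite cubic graph on $n-4$ vertices, then invoke Theorem~\ref{thm:build}, Lemma~\ref{lem:bip}, and the planar-bipartite corollary---is exactly the paper's approach. The paper's proof is a two-sentence version of what you wrote and simply asserts that such $H$ exist for all even $n-4\geq 8$ without exhibiting them.

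There is, however, a genuine error in your explicit construction for the case $m\equiv 2\pmod 4$. The ``crossed ladder'' you describe---$P_{m/2}\times K_2$ with the two long diagonals added at the ends---is precisely the M\"obius ladder $\mathit{ML}_{m/2}$, and this graph is \emph{nonplanar} for $m/2\geq 3$ (indeed $\mathit{ML}_3=K_{3,3}$, and in general $\mathit{ML}_k$ contains a $K_{3,3}$ minor). Your claim that the two added edges can be routed on opposite sides of the strip without crossing is false: one of them must eventually cross either the other diagonal or a rail of the ladder. Worse, the paper itself computes $K'(\mathit{ML}_k,3)=2$ when $k$ is odd, so using this $H$ would yield $K'(G,3)=4$, not $2$. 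You need a different family for $m\equiv 2\pmod 4$; for instance, one can start from the cube and repeatedly expand a square face into a pair of squares (adding two vertices each time while preserving planarity, bipartiteness, and 3-connectivity), or equivalently observe that $Q_3\thinspace\includegraphics[scale=.3]{Y.eps}\thinspace \mathit{Pr}_{k}$ for even $k$ covers the missing residue class.
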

  
    \begin{proof} Form $K_{3,3}\thinspace\includegraphics[scale=.3]{Y.eps}\thinspace G$  with any  3-connected planar cubic bipartite graph $G$ on $n-4$ vertices. The smallest 3-connected planar cubic bipartite graph has 8 vertices.
  \end{proof}
  
   More generally,  once we have one example with $k$  edge-Kempe equivalence classes then there will be an infinite family of them with the same number of classes. 
  
  \begin{theorem}\label{thm:inf}  If $\hat{G}$ is a cubic  graph on $\hat{n}$ vertices with  $k$ edge-Kempe equivalence classes then
  for every even $n\geq \hat{n} + 6$, there exists a cubic graph on $n$ vertices with  exactly $k$ edge-Kempe equivalence classes.  Further, if $\hat{G}$ is planar then 
 a planar  family exists, if $\hat{G}$ is bipartite then 
 a bipartite  family exists and if $\hat{G}$ is 3-connected then a 3-connected family exists. 
  \end{theorem}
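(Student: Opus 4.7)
The plan is to realize each required graph as a $Y$-composition $\hat{G}\thinspace\includegraphics[scale=.3]{Y.eps}\thinspace F$, where $F$ is a cubic filler with $K'(F,3)=1$ chosen to match the structural properties we wish to inherit. Since $|V(G_1\thinspace\includegraphics[scale=.3]{Y.eps}\thinspace G_2)| = |V(G_1)|+|V(G_2)|-2$, the filler must have $m := n-\hat{n}+2$ vertices; when $n\geq \hat{n}+6$ is even, $m$ is even and $m\geq 8$. Assuming such $F$ exists, Theorem \ref{thm:build} delivers $K'(\hat{G}\thinspace\includegraphics[scale=.3]{Y.eps}\thinspace F, 3) = K'(\hat{G},3)\cdot K'(F,3) = k\cdot 1 = k$, while Lemmas \ref{lem:plan} and \ref{lem:bip} transfer planarity and bipartiteness from the pair $(\hat{G},F)$ to the composite, so the ``further'' conclusions for those two properties follow once $F$ is chosen with the matching property.

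The work is then to supply an $F$ of each required type for every even $m\geq 8$. When only planarity or only bipartiteness (or no extra property) is needed, any planar cubic bipartite graph of order $m$ does, since all such graphs have $K'=1$ by the corollary just established. For the 3-connected case I take 3-connected planar cubic bipartite fillers: the $\ell$-prism $C_\ell\times K_2$ with $\ell\geq 4$ even supplies such an $F$ on $2\ell$ vertices, covering $m\equiv 0\pmod 4$. For $m\equiv 2\pmod 4$ I further $Y$-compose, using $Q_3\thinspace\includegraphics[scale=.3]{Y.eps}\thinspace(C_\ell\times K_2)$ for $\ell\geq 4$ even to obtain $(2\ell+6)$-vertex fillers (which inherit the three properties and $K'=1$ from their pieces), covering $m\in\{14,18,22,\dots\}$; the lone remaining case $m=10$ is handled by exhibiting one explicit 3-connected planar bipartite cubic graph on 10 vertices (face sequence six quadrilaterals and one hexagon, whose existence follows from Eberhard's theorem applied to that sequence).

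The step I expect to be the main obstacle is showing that $\thinspace\includegraphics[scale=.3]{Y.eps}\thinspace$ preserves 3-connectedness, since this is not recorded explicitly earlier in the paper. I plan to argue it directly: suppose $\{u,w\}$ were a 2-vertex cut of $\hat{G}\thinspace\includegraphics[scale=.3]{Y.eps}\thinspace F$, and locate $u,w$ relative to the 3-edge glue cut. If both cut vertices lie strictly on one side, then reinstating that side's deleted glue vertex gives a 2-vertex cut of that 3-connected side, a contradiction. If one lies on each side, the three glue edges---whose endpoints are three distinct vertices on each side in the simple 3-connected cubic setting---still witness a connection between the two sides after $\{u,w\}$ is removed, again a contradiction. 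With this transfer in hand, the construction above produces the required infinite family for every allowed combination of properties of $\hat{G}$.
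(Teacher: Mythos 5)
Your overall route is the same as the paper's: the paper's entire proof is to form $\hat{G}\thinspace\includegraphics[scale=.3]{Y.eps}\thinspace F$ with $F$ any cubic planar bipartite graph on $n+2-\hat{n}$ vertices and invoke Theorem \ref{thm:build} together with the fact that planar bipartite cubic graphs have $K'=1$. You add two things the paper leaves implicit: an explicit existence argument for the fillers, and a proof that the 3-edge-cut composition of two 3-connected cubic graphs is 3-connected. That second argument is correct and is a genuine improvement, since the paper asserts the 3-connected clause without any justification.

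However, your treatment of $m=10$ is wrong. Eberhard's theorem only guarantees that for a sequence $(p_k)_{k\neq 6}$ with $\sum_k (6-k)p_k=12$ \emph{some} value of $p_6$ is realizable; it does not realize the particular pair $(p_4,p_6)=(6,1)$, and in fact no 3-connected planar bipartite cubic graph on 10 vertices exists. Its dual would be a planar Eulerian triangulation on 7 vertices with degree sequence $(4^6,6)$, i.e.\ a wheel over a hexagon whose exterior is triangulated by three chords forming a perfect matching on the six rim vertices; but every triangulation of a hexagon has an ear, hence a rim vertex lying on no chord, so no such matching triangulation exists. Worse, when $\hat{G}$ is both 3-connected and bipartite no 10-vertex filler of any kind can rescue the construction: such a filler would have to be a 3-connected bipartite cubic graph on 10 vertices with $K'=1$, the planar ones do not exist, and Lemma \ref{lem:snbc} shows every simple nonplanar bipartite cubic graph on at most 10 vertices has $K'>1$ (3-connected cubic multigraphs other than the theta graph are also ruled out). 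So the value $n=\hat{n}+8$ is genuinely not covered by a single composition of this type for that combination of properties and needs a different argument. Note that this gap is equally present in the paper's own one-line proof; your more careful write-up simply exposes it. For the 3-connected non-bipartite case $m=10$ is fine (use the pentagonal prism, which has $K'=1$ by the computation in the final section), and all other cases of your construction go through.
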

  
    \begin{proof} Compose $\hat{G}$  with any cubic planar bipartite graph on $n+2-\hat{n}$ vertices using the $\thinspace\includegraphics[scale=.3]{Y.eps}\thinspace$ operation. The result follows from Theorem \ref{thm:build}.  \end{proof}
  
   We can make graphs with increasingly large numbers of edge-Kempe equivalence classes this way as well.
   
 \begin{theorem}\label{thm:first} For every  $k\geq 1$, there exists a  3-connected nonplanar bipartite cubic graph $G$ with $4k+2$ vertices and $2^{k}$ edge-Kempe equivalence classes. 
 \end{theorem}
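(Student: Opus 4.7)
The plan is to construct $G$ by iterating the $\thinspace\includegraphics[scale=.3]{Y.eps}\thinspace$ composition on $k$ copies of $K_{3,3}$. Define $G_1 = K_{3,3}$ and, for $k \geq 2$, set $G_k = G_{k-1}\thinspace\includegraphics[scale=.3]{Y.eps}\thinspace K_{3,3}$, choosing distinguished vertices and edge-identifications so that no multi-edges are created. Since the $\thinspace\includegraphics[scale=.3]{Y.eps}\thinspace$ composition of cubic graphs is cubic, every $G_k$ is cubic; the remaining properties will be verified by induction on $k$.

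For the vertex count, the $\thinspace\includegraphics[scale=.3]{Y.eps}\thinspace$ operation deletes the two distinguished vertices and identifies three pairs of dangling edges, so $|G_k| = |G_{k-1}| + 6 - 2 = |G_{k-1}| + 4$; combined with $|G_1| = 6$, induction gives $|G_k| = 4k + 2$. Because $K'(K_{3,3}, 3) = 2$ (noted earlier in the excerpt), Theorem \ref{thm:build} yields $K'(G_k, 3) = K'(G_{k-1}, 3) \cdot K'(K_{3,3}, 3) = 2^{k-1} \cdot 2 = 2^k$. Bipartiteness and nonplanarity propagate inductively via Lemmas \ref{lem:bip} and \ref{lem:plan}: $G_k$ is bipartite because both factors are bipartite, and $G_k$ is nonplanar because $K_{3,3}$ is nonplanar.

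The main obstacle is verifying that $G_k$ is 3-connected, since the $\thinspace\includegraphics[scale=.3]{Y.eps}\thinspace$ composition introduces a 3-edge cut that must be checked to ensure it does not yield a 2-vertex cut. The key observation is that a 3-connected cubic graph is 3-edge-connected and hence simple, so by induction the distinguished $v_1 \in G_{k-1}$ has three distinct neighbors $s_{11}, s_{12}, s_{13}$, and likewise for $v_2 \in K_{3,3}$. For any candidate 2-vertex cut $\{u, w\}$ of $G_k$, I would split into cases by how $\{u, w\}$ is distributed between the two sides $G_{k-1} - v_1$ and $K_{3,3} - v_2$ of the 3-edge cut. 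In each case, the 2-connectedness of $G_{k-1} - v_1$ and $K_{3,3} - v_2$ (which follows from 3-connectedness of each factor) together with the distinctness of the three stubs on each side forces both halves of $G_k - \{u, w\}$ to remain internally connected and guarantees at least one of the three cross-edges survives the removal of $\{u, w\}$; hence $G_k - \{u, w\}$ is connected, contradicting $\{u, w\}$ being a cut. This establishes 3-connectedness and completes the induction.
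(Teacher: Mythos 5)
Your construction is exactly the paper's: iterate the three-edge-cut composition on $k$ copies of $K_{3,3}$, count $4k+2$ vertices, and invoke Theorem \ref{thm:build} with $K'(K_{3,3},3)=2$ to get $2^k$ classes, with bipartiteness and nonplanarity from Lemmas \ref{lem:bip} and \ref{lem:plan}. The only divergence is that you sketch a verification of 3-connectedness (which the paper's proof leaves implicit); your sketch is sound, since each side minus the distinguished vertex stays connected after deleting a candidate 2-cut and at least one of the three distinct cross-edges survives.
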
  
 
 \begin{proof} 
 For $k\geq 1$, take  $K_{3,3}\thinspace\includegraphics[scale=.3]{Y.eps}\thinspace\cdots$ ($k$ copies) $\dots\thinspace\includegraphics[scale=.3]{Y.eps}\thinspace K_{3,3}$, which  has $2+4k$ vertices. By Theorem \ref{thm:build},  it has $2^k$ edge-Kempe equivalence classes. This produces the desired graph. %for $n\equiv 2\pmod 4$ and $n\geq 10$.
 
%Let $Q$ be the graph of the cube. For $k\geq 2, Q\thinspace\includegraphics[scale=.3]{Y.eps}\thinspace K_{3,3}\thinspace\includegraphics[scale=.3]{Y.eps}\thinspace\cdots$ ($k$ copies) $\dots\thinspace\includegraphics[scale=.3]{Y.eps}\thinspace K_{3,3}$ has $4+4(k+1)$ vertices and $2^k$ edge-Kempe equivalence classes. This produces the desired graph for $n\equiv 0\pmod 4$ and $n\geq 12$.
 
 \end{proof}

%subdividing a single edge of $G_1$ with  the vertices of $G_2$. 
 
  \begin{theorem}\label{thm:second} For every simple nonplanar (bipartite) cubic graph $G$ with $n$ vertices, there exists an infinite family of nonplanar (bipartite) cubic graphs $G_k$ such that $G_k$ has $6k+n$ vertices and $2^{k}K'(G,3)$ edge-Kempe equivalence classes. 
 \end{theorem}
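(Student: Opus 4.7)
The plan is to build the family $\{G_k\}$ by iteratively performing $\thinspace\includegraphics[scale=.3]{H.eps}\thinspace$-composition with $K_{3,3}$. Specifically, set $G_0 = G$ and recursively define $G_k = G_{k-1}\thinspace\includegraphics[scale=.3]{H.eps}\thinspace K_{3,3}$, where at each stage any edge of $G_{k-1}$ and any edge of $K_{3,3}$ may serve as the distinguished edges for the composition. Since the $\thinspace\includegraphics[scale=.3]{H.eps}\thinspace$ construction only deletes one edge from each operand and re-attaches new edges without introducing or removing vertices, one has $|V(G_1\thinspace\includegraphics[scale=.3]{H.eps}\thinspace G_2)|=|V(G_1)|+|V(G_2)|$. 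Iterating, $|V(G_k)| = n + 6k$, so the $G_k$ have strictly increasing orders and form an infinite family.

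Next I would establish the count $K'(G_k,3) = 2^k K'(G,3)$ by induction on $k$. The base case $k=0$ is trivial. For the inductive step, Theorem \ref{thm:build} yields
\[
K'(G_k,3) = K'(G_{k-1}\thinspace\includegraphics[scale=.3]{H.eps}\thinspace K_{3,3},3) = K'(G_{k-1},3)\cdot K'(K_{3,3},3).
\]
Since $K'(K_{3,3},3) = 2$ (as noted in the discussion preceding Lemma \ref{lem:snbc}), the factor of $2$ accumulates at each step, giving $K'(G_k,3) = 2^k K'(G,3)$.

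Finally I would verify the structural properties. Lemma \ref{lem:plan} asserts that $G_1\thinspace\includegraphics[scale=.3]{H.eps}\thinspace G_2$ is planar iff both operands are planar; because $K_{3,3}$ is nonplanar, each $G_k$ is nonplanar regardless of whether $G$ itself is. If $G$ is bipartite, Lemma \ref{lem:bip} combined with the bipartiteness of $K_{3,3}$ ensures by induction that every $G_k$ is bipartite. Each $G_k$ is cubic because the $\thinspace\includegraphics[scale=.3]{H.eps}\thinspace$ operation replaces exactly one edge at each endpoint of the deleted edges with a new one, preserving all vertex degrees.

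There is no real obstacle in this proof — the result is essentially a corollary of Theorem \ref{thm:build} together with the preservation lemmas of Section \ref{sec:decomp} and the observation that $K'(K_{3,3},3)=2$. The only mild subtlety is choosing the construction $\thinspace\includegraphics[scale=.3]{H.eps}\thinspace$ rather than $\thinspace\includegraphics[scale=.3]{Y.eps}\thinspace$, since $\thinspace\includegraphics[scale=.3]{H.eps}\thinspace$-composition with $K_{3,3}$ adds exactly $6$ vertices per step (matching the statement), whereas $\thinspace\includegraphics[scale=.3]{Y.eps}\thinspace$-composition would add only $4$.
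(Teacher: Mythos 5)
Your proposal is correct and is exactly the paper's construction: the paper's entire proof is the one line ``Take $G\thinspace\includegraphics[scale=.3]{H.eps}\thinspace K_{3,3}\thinspace\includegraphics[scale=.3]{H.eps}\thinspace\dots\thinspace\includegraphics[scale=.3]{H.eps}\thinspace K_{3,3}$,'' and you have simply supplied the routine verifications (vertex count, Theorem \ref{thm:build}, and Lemmas \ref{lem:plan} and \ref{lem:bip}) that the authors leave implicit.
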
 
 
 \begin{proof}  Take $G\thinspace\includegraphics[scale=.3]{H.eps}\thinspace K_{3,3} \thinspace\includegraphics[scale=.3]{H.eps}\thinspace\dots\thinspace\includegraphics[scale=.3]{H.eps}\thinspace K_{3,3}$.
% Take an   edge of $G$, subdivide with 6 new vertices, and add edges between them to maintain the cubic and bipartite properties. Such a  subdivision $S$ is  shown with both possible colorings in Figure \ref{fig:gadg3col}.
%\begin{figure}[htp]
%\begin{center}
%\includegraphics[scale=.6]{gadg3col.eps}
%\caption{The two possible colorings of $S$.}
%\label{fig:gadg3col}
%\end{center}
%\end{figure}
%(Notice that $S$ is essentially $K_{3,3}$ with one edge severed.) $S$ can be inserted into any edge of any cubic graph $G$ to produce $G_S$.  Because no edge-Kempe switch in $G_S$ alters the structure of the Kempe chains in either coloring of $S$, $K'(G_S,3)=2K'(G,3)$.  Inserting $k$ copies of $S$ into edges of $G$ produces $G_k$ and completes the proof.
\end{proof}

%We can construct a 2-connected but not 3-connected cubic graph from two 3-edge colorable cubic graphs, as shown in Figure \ref{fig:2-conn-meld}.
%\begin{figure}[htp]
%\begin{center}
%%\includegraphics[scale=.7]{2-conn-meld.eps}
%\caption{Melding two cubic graphs into a 2-connected cubic graph $G_1 \thinspace\includegraphics[scale=.3]{Y.eps}\thinspace_v G_2$.}
%\label{fig:2-conn-meld}
%\end{center}
%\end{figure}
%Moreover, 3-edge colorings $c_1, c_2$ of $G_1, G_2$ can be melded into a coloring  $c_1 \thinspace\includegraphics[scale=.3]{Y.eps}\thinspace_v c_2$ of $G_1 \thinspace\includegraphics[scale=.3]{Y.eps}\thinspace_v G_2$. Vertices $v_1, v_2$ from $G_1, G_2$ must be chosen for the meld.  If the colors of $G_1$ are $1,2,3$ and those of $G_2$ are $x,y,z$ then simply assign $x = 3, y=2, z=1$ and a valid coloring of $G_1 \thinspace\includegraphics[scale=.3]{Y.eps}\thinspace_v G_2$ results.

%\begin{lemma} Let $c_1 \sim d_1$ in $G_1$ and let $c_2 \sim d_2$ in $G_2$.  Then $(c_1 \thinspace\includegraphics[scale=.3]{Y.eps}\thinspace_v c_2) \sim (d_1 \thinspace\includegraphics[scale=.3]{Y.eps}\thinspace_v d_2)$ in $G_1 \thinspace\includegraphics[scale=.3]{Y.eps}\thinspace_v G_2$.
%\end{lemma}

%The proof is identical to that of Lemma \ref{lem:star}.

%Theorem \ref{thm:build} generalizes directly to the construction $G_1 \thinspace\includegraphics[scale=.3]{Y.eps}\thinspace_v G_2$, so we can construct infinite families of 2-connected, but not 3-connected, simple nonplanar bipartite cubic graphs analogous to those in Theorems \ref{thm:first} and \ref{thm:one}   and Corollary \ref{cor:3}.

\section{Computations of $K'(G,3)$}

Computing $K'(G,3)$ for particular $G$, or for families of graphs, is surprisingly difficult.  A single computation can be done by brute force by computer, but constructing a proof is another matter.  As examples of the kinds of arguments needed to determine $K'(G,3)$, we analyze   M\"obius ladder graphs, prism graphs, and crossed prism graphs.
%As a taste of theoretical computations of $K'(G,3)$, we give examples of M\"obius ladder graphs, prism graphs, and crossed prism graphs.

%\subsection{Examples}

\begin{theorem} Let $\mathit{ML}_k$ be the M\"obius ladder graph on $2k$ vertices, let $\mathit{Pr}_k$ be the prism graph on $2k$ vertices, and let $\mathit{CPr}_k$ be the crossed prism graph on $4k$ vertices.  
\begin{enumerate}
\item $K'(\mathit{ML}_k,3)=1$ when $k$ is even and $K'(\mathit{ML}_k,3)=2$ when $k$ is odd.
\item $K'(\mathit{Pr}_k,3)=1$.
\item $K'(\mathit{CPr}_k,3)=1$. 
\end{enumerate}
\end{theorem}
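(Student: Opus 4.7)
The plan is to handle each of the three families separately. For the M\"obius ladder $ML_k$, I would parameterize each proper 3-edge-coloring by the sequence of colors on the chords $v_i v_{i+k}$, together with the forced rim coloring. Once the chord color at $v_i$ is fixed, the two rim edges at $v_i$ must use the remaining two colors, giving a forced rewriting rule at each vertex as we traverse the rim. After one full traversal of the length-$2k$ rim (which identifies antipodes via the chords), the running pair of rim colors either returns to its original assignment or is swapped; define a $\mathbb{Z}/2$-valued invariant $\tau(c)$ equal to the parity of this swap. I would then show (i) $\tau$ is preserved under every edge-Kempe switch, the critical case being a switch on a 4-cycle formed by two chords and two rim edges, and (ii) both values of $\tau$ are realized precisely when $k$ is odd, while only one value is attainable when $k$ is even. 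A constructive reduction using short 4-cycle switches then brings any coloring to a canonical representative within its $\tau$-class, yielding $K'(ML_k,3)=1$ for even $k$ and $2$ for odd $k$.

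For the prism $Pr_k$: when $k$ is even, $Pr_k$ is planar, cubic, and bipartite, so $K'(Pr_k,3)=1$ follows immediately from the planar-bipartite-cubic corollary of Section~\ref{sec:comp}. For $k=3$, the three rungs form a nontrivial 3-edge cut and $Pr_3 = K_4\thinspace\includegraphics[scale=.3]{Y.eps}\thinspace K_4$; Theorem~\ref{thm:triv} with the easy fact $K'(K_4,3)=1$ then gives the result. For $k\geq 5$ odd I would argue directly: a parity count using Lemma~\ref{lem:3col} shows that the rungs must use all three colors in any proper 3-edge-coloring (otherwise one of the rim $k$-cycles would be forced to be properly 2-colored, requiring $k$ even). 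Case analysis on the rung color pattern, combined with edge-Kempe switches on the squares (4-cycles formed by two consecutive rungs and the two rim edges joining them), then collapses all 3-edge-colorings into a single Kempe class.

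For the crossed prism $CPr_k$: this graph is bipartite (the crossings preserve the natural bipartition in which the $a$-cycle and $b$-cycle vertices are alternately colored) and planar (each crossed pair of rungs can be locally untwisted by swapping the cyclic position of the matched vertices), so the planar-bipartite-cubic corollary again gives $K'(CPr_k,3)=1$.

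The main obstacle is the M\"obius ladder with $k$ odd. Defining $\tau$ and proving its invariance under every Kempe switch requires careful analysis of how a Kempe chain can interact with the chord edges, since a single switch can simultaneously flip many chord colors; showing that the Kempe switches span each $\tau$-level set in turn requires an explicit reduction algorithm to canonical form. The base case $k=3$ (which equals $K_{3,3}$) is easy because every two-color Kempe chain is Hamiltonian, so Kempe switches reduce to global color permutations, but for larger odd $k$ the chains can be shorter and the finer parity-of-swap argument must do the work.
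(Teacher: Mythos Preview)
Your approach differs substantially from the paper's, which is worth noting. The paper gives a single uniform argument for all three families: in each case it identifies a ``removable square'' (or crossed square) whose deletion and gluing reduces $ML_k$ to $ML_{k-2}$, $Pr_k$ to $Pr_{k-2}$, or $CPr_k$ to $CPr_{k-1}$, after possibly one preliminary edge-Kempe switch; the result then follows by induction from small base cases checked directly. The only ``invariant'' the paper uses for $ML_k$ with $k$ odd is the existence of a tri-Hamiltonian coloring, which is self-evidently its own Kempe class.

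Your route is a patchwork: invoking the planar--bipartite--cubic corollary for $CPr_k$ and for $Pr_k$ with $k$ even, a $\thinspace\includegraphics[scale=.3]{Y.eps}\thinspace$-decomposition for $Pr_3$, ad hoc case analysis for $Pr_k$ with $k\ge 5$ odd, and a $\mathbb{Z}/2$ invariant for $ML_k$. The parts that lean on the corollary are genuinely shorter than the paper's argument (and the observation $Pr_3 = K_4\thinspace\includegraphics[scale=.3]{Y.eps}\thinspace K_4$ is nice), so those pieces are fine. The weak spots are the two remaining cases. For $Pr_k$ with $k\ge 5$ odd you have only asserted that ``case analysis \ldots\ collapses all 3-edge-colorings into a single Kempe class''; without a reduction mechanism this is not yet a proof, and the paper's remove-a-square induction handles it in one line. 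For $ML_k$, your $\tau$ invariant is plausible, but you have correctly flagged the hard step: a single Kempe chain can traverse many chords, and you must show the parity of the total rim-swap is unchanged. This is more delicate than the paper's argument, which sidesteps any invariant by simply observing that the tri-Hamiltonian coloring exists exactly when $k$ is odd and that every \emph{other} coloring reduces by a square to $ML_{k-2}$.

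In short, your $CPr_k$ and even-$k$ $Pr_k$ arguments are correct and slicker than the paper's; your $Pr_3$ argument is correct and different; but your odd-$k$ $Pr_k$ and $ML_k$ arguments are only sketches, and the paper's uniform square-removal induction would close both gaps more cleanly than the methods you propose.
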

Note that $\mathit{Pr}_k$ is planar, and bipartite exactly when $k$ is even; $\mathit{ML}_k$ is toroidal.

\begin{proof} Our arguments are inductive.

First, consider the edge coloring of $\mathit{ML}_k$ given at left in Figure \ref{fig:mlbox}, and note that it only exists for $k$ odd. Every edge-Kempe chain in this coloring is a Hamilton circuit, so this coloring represents a edge-Kempe-equivalence class of  of $\mathit{ML}_k$.  
\begin{figure}[htp]
\begin{center}
\includegraphics[scale=.6]{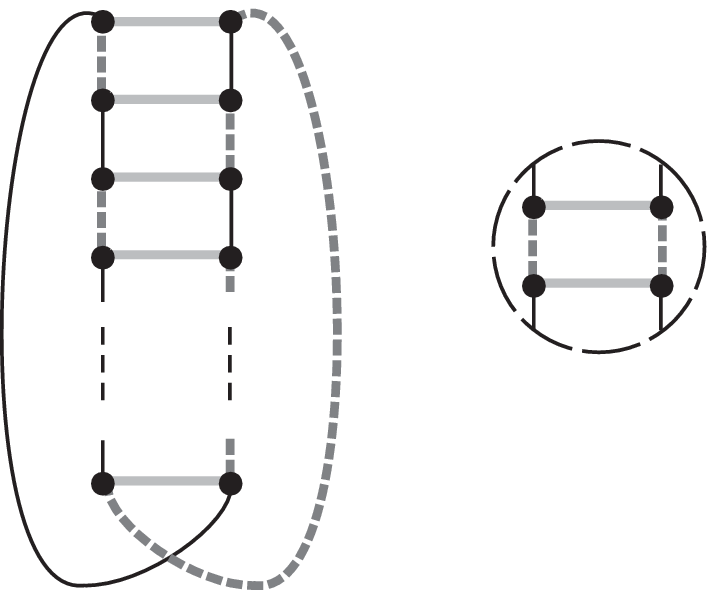} 
\caption{A tri-Hamiltonian edge coloring of $\mathit{ML}_k$ for $k$ odd (left) with a square from some other colorings of $\mathit{ML}_k$ (right).}
\label{fig:mlbox}
\end{center}
\end{figure}
Now consider any other 3-edge coloring of $\mathit{ML}_k$.  If it has a square colored as shown at right in Figure \ref{fig:mlbox}, then the square may be removed (and the remaining half-edges glued together) to produce a 3-edge coloring of $\mathit{ML}_{k-2}$.  If there is no such square in the coloring, then every square must be colored as one of the options shown in Figure \ref{fig:boxes}.
\begin{figure}[htp]
\begin{center}
\includegraphics[scale=.7]{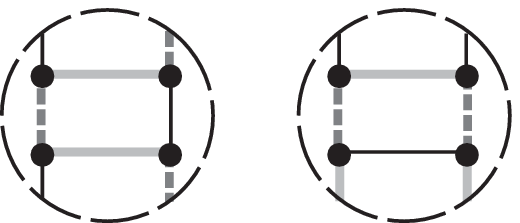} 
\caption{Colorings of squares from $\mathit{ML}_k$ that are edge-Kempe-equivalent to a removable colored square of $\mathit{ML}_k$.}
\label{fig:boxes}
\end{center}
\end{figure}
In either case, we can do a single edge-Kempe switch to produce an edge-Kempe-equivalent coloring  that contains a removable square.  Therefore $K'(\mathit{ML}_k,3) = K'(\mathit{ML}_{k-2},3)$.  To complete the proof, it suffices to show (which direct computation does) that $K'(\mathit{ML}_4,3)=1$  and $K'(\mathit{ML}_3,3)=2$.

Next consider any 3-edge coloring of $\mathit{Pr}_k$.  The same argument as for $\mathit{ML}_k$ applies, so by removing a square we see that $K'(\mathit{Pr}_k,3) = K'(\mathit{Pr}_{k-2},3)$. Because $K'(\mathit{Pr}_3,3)=K'(\mathit{Pr}_4,3)=1$ by direct computation, it then follows that $K'(\mathit{Pr}_k,3)=1$.

Finally, consider any 3-edge coloring of $\mathit{CPr}_k$. Any crossed square must have one of the local colorings shown in Figure \ref{fig:cpbox}.
\begin{figure}[htp]
\begin{center}
\includegraphics[scale=.7]{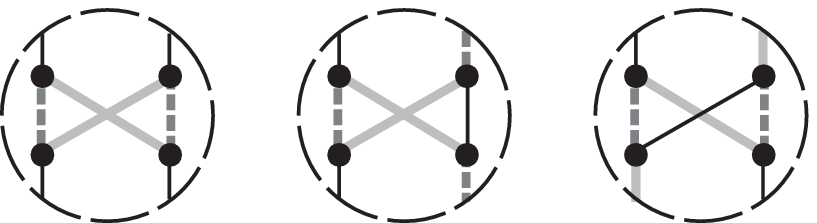} 
\caption{The possible colorings of a crossed square of  $\mathit{CPr}_k$.}
\label{fig:cpbox}
\end{center}
\end{figure}
For the leftmost two colorings of Figure \ref{fig:cpbox}, the crossed square may be removed (and the remaining half-edges glued together) to produce a 3-edge coloring of $\mathit{CPr}_{k-1}$. If there are only crossed squares with coloring type of the rightmost coloring in Figure \ref{fig:cpbox}, we can do a single edge-Kempe switch to produce an edge-Kempe-equivalent coloring that contains a removable crossed square.  (A parity argument shows that there must be at least two edge-Kempe chains in a relevant color pair.) Because $K'(\mathit{CPr}_2,3)=1$ by direct computation, it then follows that $K'(\mathit{CPr}_k,3)=1$.

\end{proof}

\section{Areas for future work}

Two major questions  remain about $K'(G)$ for cubic, nonplanar, bipartite graphs.
First, while we have shown that  there are nonplanar cubic bipartite graphs with $K'(G,3)=1$ and 
also  some with $K'(G,3)> 1$, 
there is as yet no  characterization for when each is true.  
Second,  using \emph{Mathematica} we have found bipartite cubic graphs  where  $K'(G,3) = 1, 2, 3, 4, 6, 8,  9, 15, 17, 35, 131.$  Which natural numbers, and in particular which primes,  $k$ are achievable as $K'(G,3)=k$ for $G$ a cubic nonplanar bipartite 3-connected graph, with no nontrivial edge cuts?
 These same questions can be asked for cubic 3-colorable graphs more generally: which have $K'(G,3) = 1$, and what possible $K'(G,3)$ values can occur?

Beyond just examining the number of edge-Kempe connected components, what is  the structure of the edge-Kempe-equivalence Graph of $G$, whose vertices represent colorings of $G$ and whose edges represent single edge-Kempe switches? This is the topic of
\cite{bc-H2}.

\end{document}